\def\O{\mathcal{O}}
\def\R{\mathbb{R}}
\def\Z{\mathbb{Z}}
\def\Q{\mathbb{Q}}
\def\tr{\mathrm{tr}}
\theoremstyle{plain}
\newtheorem{theorem}{Theorem}[section] 
\newtheorem{lemma}[theorem]{Lemma}
\numberwithin{equation}{section}
\theoremstyle{definition}
\theoremstyle{remark}
\title[Cubic forms over $\mathbb{Q}(\sqrt{-D})$ and pairs of rational cubic forms]{Cubic forms over imaginary quadratic number fields and pairs of rational cubic forms}
\author[C. Bernert]{Christian Bernert}
\author[L. Hochfilzer]{Leonhard Hochfilzer}
\address{Mathematisches Institut, Bunsenstraße 3-5, 37073 Göttingen, Germany}
\email{christian.bernert@uni-goettingen.de,  leonhard.hochfilzer@uni-goettingen.de}
\begin{document}
\maketitle
\begin{abstract}
    We show that every cubic form with coefficients in an imaginary quadratic number field $K/\mathbb{Q}$ in at least $14$ variables represents zero non-trivially. This builds on the corresponding seminal result by Heath-Brown for rational cubic forms. As an application we deduce that a pair of rational cubic forms has a non-trivial rational solution provided that $s \geq 627$. Furthermore, we show that every rational cubic hypersurface in at least $33$ variables contains a rational line, and that every rational cubic form in at least $33$ variables has "almost-prime" solutions.
\end{abstract}

\section{Introduction} \label{sec.intro_cubes}
The study of integer solutions to polynomial equations is one of the most fundamental mathematical problems. Quadratic forms are very well understood but the situation already becomes much more difficult when studying cubic equations. A cubic form $C(\mathbf{x}) \in \Z[x_1, \hdots, x_s]$ is a homogeneous  polynomial of degree $3$. We say that $C$ represents zero non-trivially if there is a vector $\mathbf{x} \in \mathbb{Z}^s \backslash \{\mathbf{0}\}$ such that $C(\mathbf{x})=0$.
Lewis~\cite{lewis_57_existence} and Birch~\cite{birch57existence} both independently showed that every cubic form in sufficiently many variables represents zero non-trivially.
 Using the Hardy-Littlewood circle method, Davenport~\cite{davenport_32_variables} showed that it suffices to assume  $s \geq 32$ in order to show that $C$ represents zero non-trivially, which he then improved to $s \geq 16$ in a series of papers~\cite{davenport_29_vars, davenport63}. The current state of the art is due to Heath-Brown~\cite{heath2007cubic} who showed that $14$ variables suffice.
 
The best one can hope for is that every cubic forms in at least $10$ variables represents zero non-trivially,  since there exist cubic forms in $9$ variables, which do not have non-trivial $p$-adic solutions and hence also do not represent zero non-trivially over the integers. 

More is known when the cubic form is assumed to be non-singular. In this case Heath-Brown~\cite{heath_brown_ten} showed that if $s\geq 10$ then the cubic form represents zero non-trivially, and Hooley~\cite{Hooley+1988+32+98} established the Hasse Principle if $s \geq 9$. That is, he showed that if a non-singular cubic form over $\Q$ in at least nine variables has a non-trivial $p$-adic solution for every $p$ and a non-trivial real solution then it also represents zero non-trivially over the rational numbers.

One may also consider these problems for cubic forms over a number field $K/\Q$. 
In fact the above mentioned result by Lewis was proved for any number field $K/\Q$. Using the circle method the number of variables required was reduced to $54$ by Ramanujam~\cite{ramanujam1963cubic}, which was subsequently improved to $17$ variables by Ryavec~\cite{ryavec1969cubic} and $16$ variables by Pleasants~\cite{pleasants1975cubic}. If one assumes the cubic form to be non-singular then recent work by Browning--Vishe~\cite{browning_vishe_ten} shows that ten variables suffice in order to infer the existence of a non-trivial zero, which improves previous work by Skinner~\cite{skinner_94_thirteen_vars}.
The main result of this paper is the following.
\begin{theorem} \label{thm.cubic_in_14}
Let $K/\mathbb{Q}$ be an imaginary quadratic number field. If $C(\mathbf{x})$ is a homogeneous cubic form over $K$ in at least $14$ variables then $C(\mathbf{x})$ represents zero nontrivially.
\end{theorem}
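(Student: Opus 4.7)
The plan is to run an analog of Heath-Brown's circle method directly over $K$, replacing the compact group $\R/\Z$ with $\C/\mathfrak{d}^{-1}$, where $\mathfrak{d}$ denotes the different of $K/\Q$. Let $\psi_K : \C \to \C^\times$ be the standard additive character $\psi_K(\alpha) = \exp(2\pi i \tr_{K/\Q}(\alpha))$, which is trivial on $\mathfrak{d}^{-1}$. Setting
\[
N(P) := \#\{\mathbf{x} \in \O_K^s : |\mathbf{x}|_\infty \leq P, \, C(\mathbf{x}) = 0\},
\]
one has an expansion $N(P) = c_K \int_F S(\alpha)\, d\alpha$ with $S(\alpha) = \sum_{|\mathbf{x}| \leq P} \psi_K(\alpha C(\mathbf{x}))$, where $F$ is a fundamental domain for $\mathfrak{d}^{-1}$ in $\C$. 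The goal is to establish an asymptotic $N(P) \gg P^{3s - 6}$, which for $s \geq 14$ forces the existence of a nontrivial zero once boundary and trivial contributions are dealt with.

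The key technical input is a version of the Duke--Friedlander--Iwaniec / Heath-Brown delta-symbol expansion adapted to the Dedekind ring $\O_K$, expressing $\mathbf{1}_{C(\mathbf{x}) = 0}$ as a smoothly truncated sum over moduli $q \in \O_K$ of complete exponential sums paired with archimedean oscillatory integrals over $\C$. One then follows Heath-Brown's program: apply Weyl differencing to show that $S_q(\alpha)$ exhibits square-root cancellation generically; when this fails, use geometry of numbers in the lattice $\O_K \subset \C$ to deduce that the form $C$ must have a singular locus in $\mathbb{P}^{s-1}_{\overline{K}}$ of atypically small codimension; then exploit this via an induction on the number of variables, reducing to low-dimensional cases handled by hand. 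Since $\O_K$ has $\Z$-rank $2$, the lattice-point counting steps are carried out in rank $2s$ with the additional $K$-linearity constraints on the sublattices providing the savings needed to match Heath-Brown's $14$-variable threshold.

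The main obstacle is the imaginary-quadratic analog of Heath-Brown's minor arc analysis: one must establish a cubic exponential sum bound over $\O_K$ with essentially square-root cancellation away from a well-controlled exceptional set, and classify the forms for which such a bound can fail. A further subtlety comes from the different splitting behaviours of rational primes in $\O_K$ (split, inert, ramified), each contributing a different shape to the local densities entering the singular series $\mathfrak{S}$; non-vanishing of every local factor under $s \geq 14$ must be verified in each case. Once these bounds are in place, the $\alpha$-integral splits into major and minor arcs via a Farey-type dissection for $\O_K$: the major arcs yield a product $\mathfrak{S} \cdot \mathfrak{J}$, with $\mathfrak{J}$ now a $2$-real-dimensional singular integral, while the minor arcs are controlled by the amplified Weyl bound. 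That the threshold $s = 14$ is preserved reflects a precise balance—the doubling of the Fourier-analytic dimension on the $\alpha$-side is exactly compensated by the doubling of the real dimension of the variables, while the critical singular-locus codimension estimate at the heart of Heath-Brown's argument carries over in form.
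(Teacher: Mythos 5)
Your proposal has the right general shape (circle method over the Minkowski space of $K$ with the trace character), but it invokes the wrong machinery at the decisive points and misses the two ideas that actually make the theorem work. First, the Duke--Friedlander--Iwaniec/Heath-Brown delta-symbol expansion is the tool for \emph{non-singular} cubic forms (Heath-Brown's ten-variable theorem, Browning--Vishe over number fields); the $14$-variable result for \emph{arbitrary} cubic forms rests instead on Davenport's dichotomy via the $h$-invariant. Either $h(C)<s$, in which case $C=\sum_{i\le h}L_iQ_i$ vanishes on the common zero locus of the linear forms $L_i$ and one is done immediately --- no ``induction on the number of variables reducing to low-dimensional cases handled by hand'' is needed or available --- or $h(C)=s$, which forces Davenport's geometric condition $\#\{\mathbf{x}:|\mathbf{x}|<H,\ \mathrm{rank}\,M(\mathbf{x})=r\}\ll H^{nr}$ on the Hessian, and this rank condition (not a singular-locus codimension estimate) is what feeds the exponential sum bounds. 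Second, pointwise Weyl/van der Corput differencing only reaches $16$ variables (Davenport, Pleasants); getting to $14$ requires Heath-Brown's \emph{averaged} van der Corput differencing, i.e.\ bounding the mean square $\int_{|\beta-\alpha|<\kappa}|S(\beta)|^2\,d\beta$ and additionally averaging over the denominators $\gamma$ with $N(\mathfrak{a}_\gamma)\sim R^n$, exploiting that the modulus rarely shares large factors with the minors of $M(\mathbf{h})$. Your sketch never mentions this averaging, so as written it cannot beat the $16$-variable threshold.

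The deeper gap is that you treat the passage to $K$ as a routine doubling of dimensions (``exactly compensated''), whereas the restriction to imaginary quadratic fields is forced by a specific arithmetic obstruction: one needs $\mathrm{Norm}(q)\asymp|q|^n$ and $|\theta^{-1}|\asymp|\theta|^{-1}$, which fail for any number field with infinite unit group (a unit has norm $1$ but unbounded height). These estimates are needed in exactly two places: the fractional Dirichlet approximation $|\alpha-\gamma|\ll N(\mathfrak{a}_\gamma)^{-1/n}Q^{-1}$ used to dissect the minor arcs, and the lemma asserting that if $m\in\O$ is small and all $\|\Delta^{-1}\mathrm{tr}(\alpha m\omega_j)\|$ are small then $m\in\mathfrak{a}_\gamma$ or $m=0$ (the second alternative, driven by $|\theta|$ rather than by $N(\mathfrak{a}_\gamma)$, is where $|\theta^{-1}|\asymp|\theta|^{-1}$ enters). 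Without identifying and resolving these, the minor arc analysis does not close. Finally, two smaller points: the expected main term is $\sigma P^{2(s-3)}=\sigma P^{2s-6}$ in the natural normalization, not $P^{3s-6}$; and the positivity of the local densities is not a delicate case analysis over split/inert/ramified primes but follows from Lewis's theorem on non-singular $\mathfrak{p}$-adic zeros for $s\ge 10$ --- the genuinely hard part of the singular series is its \emph{convergence}, which in the paper is obtained from a pointwise van der Corput bound $S_\gamma\ll N(\mathfrak{a}_\gamma)^{s(n-1/6)+\varepsilon}$ valid for $s\ge 13$.
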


It seems likely that our result should remain true for general number fields, however there are two serious obstructions in generalizing Heath-Brown's ideas to the number field setting, as we discuss in the course of our proof. We are able to remove these difficulties only in the special case of imaginary quadratic number fields.

Our result has some interesting applications to problems that do not involve, prima facie, any number fields. In particular, we are able to significantly reduce the number of variables needed in order to solve a pair of arbitrary cubic forms.

\begin{theorem}\label{thm.pairs}
Let $C_1, C_2 \in \mathbb{Z}[x_1, \hdots, x_s]$ be two cubic forms. If $s \geq 627$ then there exists a non-trivial integer solution to the system $C_1(\mathbf{x}) = C_2(\mathbf{x}) = 0$. 
\end{theorem}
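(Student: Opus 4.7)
My strategy is to reduce the problem to a single rational cubic form in at least $14$ variables, where the classical Heath-Brown theorem \cite{heath2007cubic} provides a non-trivial rational zero. Concretely, I would try to locate a $\mathbb{Q}$-rational linear subspace $L \subset \mathbb{Q}^s$ of dimension at least $14$ on which $C_1$ vanishes identically; once this is achieved, the restriction $C_2|_L$ is a rational cubic form in $\dim L \geq 14$ variables and has a non-trivial rational zero by Heath-Brown's theorem. Such a zero lies in $L \subseteq V(C_1)$ and is therefore a common non-trivial rational zero of $C_1$ and $C_2$.

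To construct such a subspace I would iterate the rational-line consequence of Theorem~\ref{thm.cubic_in_14} announced in the abstract (every rational cubic form in at least $33$ variables contains a rational line). Starting from a rational point of $V(C_1)$ supplied by Heath-Brown's theorem, the idea is to extend inductively a rational $k$-dimensional subspace $L_k \subset V(C_1)$ to a $(k+1)$-dimensional one. Writing $L_k = \mathrm{span}(\mathbf{p}_1, \ldots, \mathbf{p}_k)$ and letting $T_1$ denote the symmetric trilinear form associated with $C_1$, the condition that $L_k + \mathbb{Q}\mathbf{q}$ still lies on $V(C_1)$ becomes, after expanding $C_1(\mathbf{p} + t\mathbf{q}) = 0$ in $t$ for every $\mathbf{p} \in L_k$, a system of $\binom{k+1}{2}$ linear forms (from $T_1(\mathbf{p}, \mathbf{p}, \mathbf{q}) = 0$), $k$ quadratic forms (from $T_1(\mathbf{p}, \mathbf{q}, \mathbf{q}) = 0$), and one cubic form ($C_1(\mathbf{q}) = 0$) in $\mathbf{q}$. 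The linear forms cut $\mathbf{q}$ down to a subspace of dimension $s - \binom{k+1}{2}$, on which one has to solve a mixed system of $k$ quadratics and one cubic; such a common rational zero is produced by a suitable application of the $33$-variable rational-line theorem (together with elementary bounds for systems of rational quadratic forms, which become negligible for large $s$). After $13$ successive extensions one arrives at a $14$-dimensional rational linear subspace of $V(C_1)$, and a careful bookkeeping yields the stated bound $s \geq 627$.

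The main obstacle is precisely this extension step: at each stage one has to handle the quadratic forms \emph{together with} the cubic form, and doing so while keeping the consumed number of variables under control across $13$ iterations requires a careful argument. The numerical bound $s \geq 627$ reflects the accumulated budget of linear, quadratic, and cubic constraints imposed at each level of the iteration from a single rational point on $V(C_1)$ up to a $14$-dimensional rational subspace, where Heath-Brown's theorem can finally be invoked on $C_2|_L$.
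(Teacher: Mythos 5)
Your overall reduction is the same as the paper's: find a $13$-dimensional rational projective linear subspace $L$ of $V(C_1)$, then apply Heath-Brown's $14$-variable theorem to $C_2|_L$. Your bookkeeping for the inductive extension step is also correct ($\binom{k+1}{2}$ linear forms, $k$ quadratic forms and one cubic form in $\mathbf{q}$). But there is a genuine gap at the heart of the extension step, namely where you write that the mixed system of $k$ rational quadratics and one cubic is handled by ``elementary bounds for systems of rational quadratic forms, which become negligible for large $s$.'' No such bounds exist over $\mathbb{Q}$: a system of rational quadratic forms in arbitrarily many variables need not have even a single non-trivial rational zero, because the forms may be definite at the real place (e.g.\ $x_1^2+\dots+x_s^2=0$). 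The quadratic forms $T_1(\mathbf{p}_i,\mathbf{q},\mathbf{q})$ arising in the expansion are not under your control, so you cannot guarantee any indefiniteness, and hence cannot produce a large rational linear subspace on which they all vanish. This is precisely the obstruction the paper identifies as ``the crux.''

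The paper's way around this is the entire point of Theorem~\ref{thm.cubic_in_14}: one passes to an imaginary quadratic field $K$, where (having no real embeddings) a system of $m$ quadratic forms in more than $\beta(m,d)$ variables always contains a $d$-dimensional projective $K$-linear subspace of zeros, by Hasse--Minkowski and the theorem of Colliot-Th\'el\`ene--Sansuc--Swinnerton-Dyer. One then solves the remaining cubic on the resulting $13$-dimensional projective $K$-linear space — which requires the $14$-variable theorem \emph{over $K$}, not over $\mathbb{Q}$ — and finally descends from the $K$-rational linear space back to a $\mathbb{Q}$-rational one of the same dimension via an intersection-theoretic argument of Lewis (the plane $W$ spanned by $V$, its conjugate $V^*$ and the $\mathbf{w}_i$ meets $V(C_1)$ in a third, conjugation-invariant, linear component). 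Your proposal omits both the passage to $K$ and the descent, and without them the numerology $s>\frac{5m^2+29m}{2}+15$ at $m=13$, i.e.\ $s\geq 627$, cannot be recovered; indeed the argument as written does not close at any number of variables.
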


Previously it was shown by Dietmann and Wooley~\cite{dietmannwooley2003} that $828$ variables suffice to solve a pair of cubic forms. If the variety defined by $C_1=C_2=0$ is assumed to be non-singular, work of Brandes~\cite{brandes_note_p_adic} building on work of Dumke~\cite{dumke2014quartic} shows that $132$ variables suffice. As in the work of Dietmann--Wooley, our result follows by constructing rational linear subspaces on a rational cubic hypersurface. Theorem~\ref{thm.cubic_in_14} enables us to prove the existence of rational linear subspaces of a given dimension by requiring fewer variables than in previous work.

\begin{theorem} \label{thm.lines}
Let $C \in \mathbb{Z}[x_1, \hdots, x_s]$ be a cubic form and let $m \ge 0$ be an integer. If \[s>\frac{5m^2+29m}{2}+\begin{cases} 13 & 2 \mid m\\15 & 2 \nmid m \end{cases},\]
then the projective hypersurface $X$ defined by $C(\mathbf{x}) = 0$ contains a rational projective linear space of dimension $m$. In particular, if $s \geq 33$
then $X$ contains a rational projective line.
\end{theorem}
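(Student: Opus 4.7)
The proof plan is by induction on $m$. The base case $m=0$ is Heath-Brown's theorem (the $K=\mathbb{Q}$ case of Theorem~\ref{thm.cubic_in_14}): the condition $s>13$ becomes $s\geq 14$, which guarantees a rational point on $X$.

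For the inductive step, assume inductively that a rational projective linear space $L_{m-1}\subset X$ of dimension $m-1$ has been constructed, spanned by $\mathbf{y}_0,\ldots,\mathbf{y}_{m-1}\in\mathbb{Q}^s$. I seek $\mathbf{y}_m\in\mathbb{Q}^s$, linearly independent from the $\mathbf{y}_i$, such that $C$ vanishes identically on the enlarged span. Writing out
\[
C(t_0\mathbf{y}_0+\cdots+t_m\mathbf{y}_m)
\]
as a polynomial in $(t_0,\ldots,t_m)$ and discarding the monomials independent of $t_m$ (which vanish since $L_{m-1}\subset X$), the identity reduces to a system on $\mathbf{y}_m$ consisting of $\binom{m+1}{2}$ linear equations (coefficients of $t_it_jt_m$ with $0\leq i\leq j\leq m-1$), $m$ quadratic equations (coefficients of $t_it_m^2$), and the single cubic equation $C(\mathbf{y}_m)=0$ (coefficient of $t_m^3$).

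After the linear constraints restrict $\mathbf{y}_m$ to a subspace $V\subseteq\mathbb{Q}^s$ of codimension at most $\binom{m+1}{2}$, the task reduces to finding a nontrivial common rational zero in $V$ of $m$ quadratic forms $Q_1,\ldots,Q_m$ together with the cubic $C$. The plan is to first construct, inside $V$, a rational common isotropic subspace $W$ of the $Q_i$'s of dimension at least $14$, and then apply Theorem~\ref{thm.cubic_in_14} (with $K=\mathbb{Q}$, i.e.\ Heath-Brown's theorem) to $C|_W$ to produce the desired $\mathbf{y}_m$. Provided the construction of $W$ requires at most $2m^2+14m$ variables beyond the $14$ used for the final cubic step, the overall requirement becomes
\[
s\;\geq\;\binom{m+1}{2}+2m^2+14m+14\;=\;\frac{5m^2+29m}{2}+14,
\]
matching the stated bound $s>\frac{5m^2+29m}{2}+13$ for even $m$.

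The main obstacle, and the point where Theorem~\ref{thm.cubic_in_14} for imaginary quadratic $K$ genuinely enters, is achieving the sharp cost $2m^2+O(m)$ in the construction of $W$. Iterating Witt decomposition over $\mathbb{Q}$ one quadratic at a time would cost roughly $4m^2$ variables, which is too expensive. Instead, I would pair the $\mathbb{Q}$-quadratic forms into single $K$-quadratic forms $Q_{2i-1}+\sqrt{-D}\,Q_{2i}$ over an imaginary quadratic field $K=\mathbb{Q}(\sqrt{-D})$, effectively halving the number of forms to manage at each reduction step, and appeal to Theorem~\ref{thm.cubic_in_14} over $K$ to resolve auxiliary cubic subsystems that appear in the construction. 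Descending $K$-rational isotropic subspaces to $\mathbb{Q}$-rational ones requires Galois-stable choices, and when $m$ is odd the leftover unpaired $\mathbb{Q}$-quadratic costs two extra variables and is the source of the $+15$ instead of $+13$ in the stated bound. Arranging these choices while retaining the sharp dimension count, and controlling the behaviour at bad primes of the resulting $K$-quadratic systems, is the delicate quantitative heart of the argument.
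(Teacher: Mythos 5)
Your skeleton is right — induction on $m$, reduction to a system of $\binom{m+1}{2}$ linear, $m$ quadratic and one cubic condition on the new vector, and the recognition that indefiniteness of the $Q_i$ over $\mathbb{Q}$ is the obstruction that forces a detour through an imaginary quadratic field. Your variable count also matches the paper's $m+\binom{m+1}{2}+\beta(m,13)$ with $\beta(m,d)=2m^2+d(m+1)+O(1)$. But the mechanism you propose for exploiting $K$ has a genuine gap in two places. First, the pairing $Q_{2i-1}+\sqrt{-D}\,Q_{2i}$ does not reduce the system: for a point of $K^s$ (as opposed to $\mathbb{Q}^s$) the vanishing of this single $K$-form is strictly weaker than the vanishing of both $Q_{2i-1}$ and $Q_{2i}$, so halving the number of forms this way loses the conditions you need. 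The actual source of the $2m^2$ cost is simply Dietmann--Wooley's bound for common $K$-linear isotropic subspaces of $m$ quadratic forms over the totally imaginary field $K$ (induction from Hasse--Minkowski and Colliot-Th\'el\`ene--Sansuc--Swinnerton-Dyer), with the parity defect $+2$ for odd $m$ built into that bound — no pairing and no ``auxiliary cubic subsystems'' occur there.

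Second, and more seriously, your descent is aimed at the wrong object. A $K$-rational common isotropic subspace of the $Q_i$ cannot in general be descended to a $\mathbb{Q}$-rational one of comparable dimension (``Galois-stable choices'' are not available: a subspace and its conjugate may intersect trivially, and their span need not be isotropic), and indeed no such rational subspace need exist at all if the $Q_i$ are definite. Consequently your plan of producing a \emph{rational} $14$-dimensional $W$ and applying Heath-Brown over $\mathbb{Q}$ cannot work; one is forced to accept $\mathbf{v}\in K^s\setminus\mathbb{Q}^s$, obtained by applying Theorem~\ref{thm.cubic_in_14} over $K$ on a $13$-dimensional projective $K$-linear space where the $Q_i$ and $L_{i,j}$ vanish. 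The descent then happens one level up, at the cubic hypersurface: the $(m+1)$-plane $W$ spanned by $\mathbf{v}$, $\mathbf{v}^*$ and the rational $\mathbf{w}_i$ meets $X$ in the two conjugate $m$-planes $V$, $V^*$, each with intersection multiplicity one, so by B\'ezout (degree $3$) there is a third component, necessarily an $m$-plane and necessarily Galois-stable, hence rational. This Lewis-type intersection argument is the essential ingredient missing from your proposal; without it the inductive step does not close over $\mathbb{Q}$.
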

This improves on the work of Dietmann--Wooley who required
\[s>\frac{5m^2+33m}{2}+\begin{cases} 15 & 2 \mid m\\17 & 2 \nmid m \end{cases}.\]
On choosing $m=13$, Theorem~\ref{thm.pairs} follows immediately from Theorem~\ref{thm.lines}, as the latter allows us to find a $13$-dimensional projective linear subspace on the surface given by $C_1(\mathbf{x})=0$, on which we can solve $C_2(\mathbf{x})=0$ in view of Heath-Brown's $14$-variable result.

The improvement in Theorem~\ref{thm.pairs} thus arises from two different sources: By Heath-Brown's improvement of Davenport's 16-variable result, it suffices to choose $m=13$ instead of $m=15$ in the application of Theorem~\ref{thm.lines}. This alone suffices to establish Theorem~\ref{thm.pairs} for $s \ge 655$. On the other hand, to prove our version of Theorem~\ref{thm.lines} and hence deduce Theorem~\ref{thm.pairs} for $s \ge 627$ we really require our Theorem~\ref{thm.cubic_in_14}, i.e. the improvement of Pleasants' $16$-variable result in the number field case.

We also note that in the case when we seek to show the existence of a rational projective line, i.e. the case $m=1$ in Theorem~\ref{thm.lines}, our result improves on work of Wooley~\cite{wooley1997} who had the same result under the assumption $s \ge 37$. Another two variables can be saved using ideas from forthcoming work by Brandes and Dietmann~\cite{brandes_dietmann_II}, thus leading to a result for $s \ge 31$ variables. 

More specifically, while our argument (building on Wooley's) only requires Theorem \ref{thm.cubic_in_14} for one imaginary quadratic number field (e.g. $\mathbb{Q}(i)$), the full generality of Theorem \ref{thm.cubic_in_14} is required in the argument of Brandes and Dietmann. 

It is also worth mentioning that in a different paper of the same authors~\cite{brandes_dietmann}, the existence of a rational projective line for $s \ge 31$ variables is already established under the assumption that the underlying hypersurface is nonsingular.

Based on an observation of Brüdern--Dietmann--Liu--Wooley~\cite{birchgoldbach2010}, the existence of rational lines can be used in conjunction with the Green--Tao Theorem to produce almost prime solutions to cubic forms as follows:

\begin{theorem} \label{thm.primes}
Let $C$ be a cubic form in $s \ge 33$ variables with rational coefficients. Then there are almost prime solutions to $C(\mathbf{x})=0$ in the following sense: There are coprime integers $c_1,\dots,c_s$ such that the equation
\[C(c_1p_1,c_2p_2,\dots,c_sp_s)=0\]
has infinitely many solutions in primes $p_1,\dots,p_s$, not all equal.
\end{theorem}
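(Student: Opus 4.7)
The plan is to use Theorem~\ref{thm.lines} (with $m=1$) to produce a rational line on the hypersurface $\{C=0\}$ and then promote it to an almost-prime solution via the theorem of Green and Tao on linear equations in primes, following the observation of Brüdern--Dietmann--Liu--Wooley~\cite{birchgoldbach2010}.

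Since $s\ge 33$, Theorem~\ref{thm.lines} with $m=1$ supplies a rational projective line on $\{C=0\}$. Clearing denominators and dividing out a common factor, I would fix a primitive parametrisation $\ell(u,v) = u\mathbf{a} + v\mathbf{b}$ with $\mathbf{a},\mathbf{b}\in\Z^s$ and $\gcd(a_1,\dots,a_s,b_1,\dots,b_s)=1$, satisfying $C(u\mathbf{a}+v\mathbf{b})\equiv 0$. Setting $c_i:=\gcd(a_i,b_i)$ and writing $(\ell(u,v))_i = c_i\psi_i(u,v)$, the $\psi_i$ are primitive linear forms in $(u,v)$, and the primitivity of $(\mathbf{a},\mathbf{b})$ forces $\gcd(c_1,\dots,c_s)=1$, furnishing the coprimality required by the theorem.

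The problem then reduces to producing infinitely many $(u,v)\in\Z^2$ for which every $\psi_i(u,v)$ is prime and for which these primes are not all equal. After identifying proportional $\psi_i$ (whose values automatically coincide up to sign, and may be taken to give the same prime $p_i$), the remaining distinct forms are pairwise non-proportional linear forms in two variables, which gives finite complexity, and a Hardy--Littlewood-type asymptotic for the number of admissible $(u,v)\in[-N,N]^2$ with $\psi_i(u,v)$ simultaneously prime is then the content of the Green--Tao--Ziegler theorem on linear equations in primes.

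The principal obstacle I expect is verifying admissibility of the system $\{\psi_i\}$: for each prime $\ell$ one must exhibit $(u,v)\bmod \ell$ with $\ell\nmid \psi_i(u,v)$ for every $i$. For $\ell>s$ a trivial union bound suffices, but at the small primes $\ell\le s$ an unfortunate choice of line may force a local obstruction. The remedy, standard in this setting, is the $W$-trick: one restricts $(u,v)$ to a carefully chosen residue class modulo the product of small primes, replacing $(\mathbf{a},\mathbf{b})$ by a suitable integer linear transformation of itself if necessary. Once admissibility has been secured, the Green--Tao--Ziegler theorem delivers the required infinite family of prime specialisations, and non-triviality (i.e.\ the primes $p_i$ not all equal) is automatic from the fact that the line is genuinely two-dimensional, so that the forms $\psi_i$ span a two-dimensional space of linear forms and thus cannot all be proportional.
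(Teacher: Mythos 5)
Your overall strategy (line via Theorem~\ref{thm.lines} with $m=1$, then prime specialisation following \cite{birchgoldbach2010}) matches the paper's, but the prime-specialisation step has a genuine gap at exactly the point you flag as the ``principal obstacle'', and the remedies you propose do not repair it. With your canonical choice $c_i=\gcd(a_i,b_i)$, the primitive forms $\psi_i(u,v)$ can form a genuinely inadmissible system: for a small prime $\ell\le s$ the directions $(a_i/c_i : b_i/c_i)$ may cover all $\ell+1$ points of $\mathbb{P}^1(\mathbb{F}_\ell)$ (already $u$, $v$, $u+v$ exhibits this at $\ell=2$, and nothing about lying on a cubic hypersurface excludes such configurations among $33$ coordinate forms). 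In that case \emph{no} residue class $(u,v)$ modulo the product of small primes avoids the obstruction, so the $W$-trick is of no help --- the $W$-trick is a normalisation device for admissible systems, not a way to create admissibility. Likewise, replacing $(\mathbf{a},\mathbf{b})$ by a $\mathrm{GL}_2(\mathbb{Z})$-transform does not change the set of integer points on the line, hence cannot remove the obstruction, and a non-unimodular transform only passes to a sublattice. As written, your argument would, in the obstructed case, reduce to statements of twin-prime strength.

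The missing idea is that the $c_i$ are at your disposal and should be chosen to kill all local obstructions, not fixed as $\gcd(a_i,b_i)$. The paper first arranges (discarding coordinates with $a_i=b_i=0$ and taking a linear combination) that all $a_i,b_i\ne 0$, then rescales the parameter $u$ by $a_1\cdots a_s$ and the $i$-th variable by $a_i$ (absorbing $a_i$ into $c_i$), so that every coordinate form becomes $t+b_iu$. For this system $(t,u)=(1,0)$ witnesses local solvability at every prime, and in fact no general transference machinery is needed: one simply takes $(t,u)=(\ell,d)$ running over arithmetic progressions of primes of length $2M+1$ with $M=2\max_i|b_i|+1$, supplied by the original Green--Tao theorem \cite{greentao}. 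So besides closing the admissibility gap, this normalisation lets you avoid the full Green--Tao--Ziegler theorem altogether; your observation that linear independence of $\mathbf{a},\mathbf{b}$ forces the resulting primes not to be all equal is correct and is also how the paper concludes.
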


We note that one can obtain the same result for $s \ge 31$, assuming the corresponding version of Theorem \ref{thm.lines}.

For comparison, the existence of prime solutions is only known for non-singular cubic forms (satisfying suitable local conditions) and under the assumption of a much larger number of variables, cf. the work of Yamagishi~\cite{yamagishi_prime} and Liu--Zhao~\cite{liu2021forms}. These authors require 8996 and 9216 variables, respectively, in the case of cubic forms, although an inspection of the proof of Lemma 8.2 in \cite{liu2021forms} shows that the argument only requires 4740 variables in this special case.

\subsection*{Notation}

We use $e(\alpha)=e^{2\pi i\alpha}$ and the notation $O(\dots)$ and $\ll$ of Landau and Vinogradov, respectively. All implied constants are allowed to depend on the number field $K$, a choice of integral basis $\Omega$ for $K$, the cubic form $C$ and a small parameter $\varepsilon >0$ whenever it appears.

As is convenient in analytic number theory, this parameter $\varepsilon$ may change its value finitely many times. In particular, we may write something like $M^{2\varepsilon} \ll M^{\varepsilon}$.

We often use the notation $q \sim R$ to denote the dyadic condition $R<q \le 2R$.

\subsection*{Acknowledgements}
We thank Tim Browning for useful discussions. We are also grateful to Julia Brandes and Rainer Dietmann for alerting us to their work~\cite{brandes_dietmann_II} and sharing a preprint. 
This work was carried out while the authors were Ph.D. students at the University of Göttingen, supported by the DFG Research Training Group 2491 \lq Fourier Analysis and Spectral Theory\rq{}.
\section{Deduction of Theorems \ref{thm.lines} and \ref{thm.primes}} \label{sec.deduction}

In this section, we give the proofs of Theorems \ref{thm.lines} and \ref{thm.primes} assuming Theorem \ref{thm.cubic_in_14}.

We begin with the observation that the existence of a $m$-dimensional rational projective linear space on the cubic hypersurface defined by $C$ is equivalent to the existence of linearly independent vectors $\mathbf{v}$ and $\mathbf{w}_1,\dots,\mathbf{w}_m$ such that $C(\mathbf{v}+t_1\mathbf{w}_1+\dots+t_m\mathbf{w}_m)=0$ identically in $t_1,\dots,t_m$. Expanding this formally as a cubic polynomial in $t_1,\dots,t_m$, we obtain
\[
C(\mathbf{v})+\sum_{i=1}^m t_i Q_i(\mathbf{v})+\sum_{1 \le i \le j \le m} t_it_jL_{i,j}(\mathbf{v})+C(t_1\mathbf{w_1}+\dots+t_m\mathbf{w_m})=0
\]
for certain quadratic and linear forms $Q_{i}$ and $L_{i,j}$ respectively, depending on $\mathbf{w}_1,\dots,\mathbf{w}_m$. We therefore need to find linearly independent $\mathbf{v}$ and $\mathbf{w}_1,\dots,\mathbf{w}_m$ such that 
\[
C(\mathbf{v})=Q_{i}(\mathbf{v})=L_{i,j}(\mathbf{v})=C(t_1\mathbf{w_1}+\dots+t_m\mathbf{w_m})=0, \quad 1 \leq i \leq j \leq m
\]
is satisfied.
By induction, we may choose linearly independent vectors $\mathbf{w}_1,\dots,\mathbf{w}_m$ satisfying of $C(t_1\mathbf{w_1}+\dots+t_m\mathbf{w_m})=0$. The linear equations $L_{i,j}(\mathbf{v})=0$ and the linear independence to $\mathbf{w}_1,\dots,\mathbf{w}_m$ then reduce the degrees of freedom for $\mathbf{v}$ by $m+\binom{m+1}{2}$. We are thus looking for a solution to the system $C(\mathbf{v})=Q_{1}(\mathbf{v})=\dots=Q_m(\mathbf{v})=0$ of one cubic and $m$ quadratic equations in $s-m-\binom{m+1}{2}$ variables. If we knew that the quadratic forms $Q_{i}$ were sufficiently indefinite, we could infer the existence of a sufficiently large linear space on which all the $Q_i$ vanish, leaving us with a single cubic form in many variables, that can be dealt with by the work of Heath-Brown~\cite{heath2007cubic}.
The crux however is that it is in general hard to control the signature of the $Q_{i}$. Instead we avoid the indefiniteness issue by passing to an imaginary quadratic number field of $\mathbb{Q}$, thus requiring our Theorem~\ref{thm.cubic_in_14}.

We now present the complete argument in order: If $m=0$, Theorem~\ref{thm.lines} is just a restatement of Heath-Brown's result on cubic forms in $14$ variables. Now suppose that $m \ge 1$. By induction, we may choose  $\mathbf{w}_1,\dots,\mathbf{w}_m \in \mathbb{Q}^s \backslash \{\bm{0}\}$ such that $C(t_1\mathbf{w}_1+\dots+t_m\mathbf{w}_m)=0$. 

Letting $K/\mathbb{Q}$ be any imaginary quadratic number field, we next show the existence of a vector $\mathbf{v} \in K^s$, linearly independent to $\mathbf{w}_1,\dots,\mathbf{w}_m$ and satisfying
\[
C(\mathbf{v})=Q_i(\mathbf{v})=L_{i,j}(\mathbf{v})=C(t_1\mathbf{w_1}+\dots+t_m\mathbf{w_m})=0.
\]
To this end, we use that a variety $Q_i(\mathbf{x})=0$, $1 \leq i \leq m$ defined by $m$ quadratic forms in $s$ variables contains a $d$-dimensional projective $K$-linear subspace, provided that 
\[s >\beta(m,d)=2m^2+d(m+1)+\begin{cases} 0 & 2 \mid m\\ 2 & 2 \nmid m \end{cases}.\]
This is equation (2.11) in \cite{dietmannwooley2003} and is easily proved by induction in view of the base cases $\beta(1,0)=4$ and $\beta(2,0)=8$ given by the Hasse--Minkowski theorem and a result of Colliot-Thélène, Sansuc and Swinnerton-Dyer~\cite{colliot}, respectively.

We thus obtain a $13$-dimensional projective linear space orthogonal to all the vectors $\mathbf{w}_i$ on which all the forms $Q_i$ and $L_{i,j}$ vanish provided that
\[s>m+\binom{m+1}{2}+\beta(m,13)=\frac{5m^2+29m}{2}+\begin{cases} 13 & 2 \mid m\\15 & 2 \nmid m \end{cases}.\]
We are then left to solve the equation $C(\mathbf{v})=0$ on this $13$-dimensional linear space which can be done by Theorem~\ref{thm.cubic_in_14}.

We have thus proved that $C(\mathbf{v}+t_1\mathbf{w}_1+\dots+t_m\mathbf{w}_m)=0$ identically in $t_1,\dots,t_m$ for some linearly independent vectors $\mathbf{v} \in K^s$ and $\mathbf{w}_1,\dots,\mathbf{w}_m \in \mathbb{Q}^s$.
By an observation of Lewis, this is enough to deduce the existence of a rational linear space of the same dimension, as we explain now, following an argument of Dietmann-Wooley \cite{dietmannwooley2003}.

Consider the $m$-dimensional $K$-rational spaces $V$ spanned by $\mathbf{v}$ and $\mathbf{w}_1,\dots,\mathbf{w}_m$ as well as $V^*$ spanned by $\mathbf{v}^*$ and $\mathbf{w}_1,\dots,\mathbf{w}_m$ where $^*$ denotes conjugation in $K$. If $\mathbf{v} \in \mathbb{Q}^s$ we are already done. Else, consider the $m+1$-dimensional space $W$ spanned by $\mathbf{v}$, $\mathbf{v}^*$ and $\mathbf{w}_1,\dots,\mathbf{w}_m$. If $C$ vanishes on $W$, we are also done as $W$ clearly contains a $m$-dimensional $\mathbb{Q}$-rational subspace. Else, by intersection theory 
the hypersurface defined by $C$ must intersect $W$ in a third $m$-dimensional $K$-rational subspace $L$. More precisely, by Theorem I.7.7 in Hartshorne~\cite{hartshorne2013algebraic} we have
\[
i(W,C;V)+i(W,C;V^*)+\sum_j i(W,C;Z_j)  \cdot \deg Z_j=(\deg W)(\deg C)=3
\]
where $i(W,C;V)$ denotes the intersection multiplicity and $Z_i$ are the other irreducible components of $C \cap W$. Since $W$ is invariant under conjugation, we must have $i(W,C;V)=i(W,C;V^*)$ and thus both numbers are equal to $1$, implying that there is a unique third component $L=Z_1$ which is then necessarily linear. Finally, since $W$ and $C$ are conjugation invariant, the three spaces $V$, $V^*$ and $L$ are permuted under conjugation and thus $L$ itself is conjugation invariant, i.e. the desired rational linear space. \qed

We remark that the use of intersection theory in the previous argument can be replaced by an explicit algebraic computation, as shown in Wooley~\cite{wooley1997}. \\

To deduce Theorem \ref{thm.primes}, we follow the strategy in \cite{birchgoldbach2010}. In particular, we show that the existence of a rational line implies the existence of almost prime solutions, regardless of the number of variables. We thus assume that for some linearly independent vectors $\mathbf{a},\mathbf{b} \in \mathbb{Z}^s$, we have $C(\mathbf{a}t+\mathbf{b}u)=0$ identically in $t$ and $u$. If $a_i=b_i=0$ for some $i$, then we can set $c_i=1$ and continue to work with the other variables. By taking a suitable linear combination, we can then assume that indeed all $a_i$ and $b_i$ are different from $0$. Rescaling $u$ by a factor of $a_1a_2\dots a_s$ and then rescaling the variables by a factor of $a_i$ (thereby changing $c_i$ by a factor of $a_i$), we may even assume that all the $a_i$ are equal to $1$, i.e.
\[C(t+b_1u,t+b_2u,\dots,t+b_nu)=0\]
identically in $t$ and $u$. By the Green--Tao Theorem \cite{greentao}, the primes contain infinitely many arithmetic progressions of length $2M+1$ where $M=2\max_i \vert b_i\vert+1$, i.e. there are infinitely many pairs $(\ell,d)$ such that $\ell+kd$ is prime for all $\vert k\vert \le M$. Choosing $t=\ell$ and $u=k$ then yields the desired result with $c_i=1$. \qed

\section{Algebraic Preliminaries} \label{sec.algebraic_prelim}
We now prepare for the proof of Theorem~\ref{thm.cubic_in_14}. While our main result is proved only for imaginary quadratic number fields we will introduce the matter in a general fashion and not restrict ourselves to these fields for now. We will aim to highlight whenever phenomena occur that set apart the situation for imaginary quadratic number fields from a general setting. In particular, even when $K/\Q$ is an imaginary quadratic number field we still sometimes prefer to write $n = [K \colon \Q]$. 

Let $K$ be a number field of degree $n$ over $\mathbb{Q}$ and denote by $\O$ its ring of integers. 

 Define the $\mathbb{R}$-vector space $K_{\mathbb{R}} \coloneqq K \otimes_{\mathbb{Q}} \mathbb{R}$ and note that we have natural embeddings $\O \subset K \subset K_{\mathbb{R}}$. The space $K_\R$ is sometimes referred to as the \emph{Minkowski space} of $K$. Note that there exist integers $n_1$ and $n_2$ with $n_1+n_2 = n$ such that $K$ admits $n_1$ real embeddings $\sigma_1, \hdots, \sigma_{n_1}$ and $2n_2$ complex embeddings $\sigma_{n_1+1}, \overline{\sigma}_{n_1+1}, \hdots, \sigma_{n_1+n_2}, \overline{\sigma}_{n_1+n_2}$ so that $K_{\mathbb{R}} \cong \mathbb{R}^{n_1} \times \mathbb{C}^{n_2}$.

Denote by $\pi_i$ the projection from $K_{\mathbb{R}} \cong \mathbb{R}^{n_1} \times \mathbb{C}^{n_2}$ to the $i$-th coordinate, which may take real or complex values.
 We define the trace map $\mathrm{tr} \colon K_{\mathbb{R}} \rightarrow \mathbb{R}$ and norm map $\mathrm{Norm} \colon K_{\mathbb{R}} \rightarrow \mathbb{R}$ as 
\[
\mathrm{tr}(\alpha) = \sum_{i=1}^{n_1} \pi_i(\alpha) + \sum_{i=n_1+1}^{n_2} \Re (\pi_i(\alpha)),
\]
and
\[
\mathrm{Norm}(\alpha) = \prod_{i=1}^{n_1} \left\lvert \pi_i(\alpha) \right\rvert \prod_{i=n_1+1}^{n_2} \left\lvert \pi_i(\alpha) \right\rvert^2,
\]
respectively. If $\alpha \in K$ then these are just the usual norm and trace function from algebraic number theory. 

Pick a basis $\Omega = \{\omega_1, \hdots, \omega_n\}$ of $\O$.
Any element $\alpha \in K_{\mathbb{R}}$ may be expressed in the form
$\alpha = \sum_{j=1}^n \alpha_j \omega_j$ for some $\alpha_j \in \mathbb{R}$. For such $\alpha$ we define a height \[\lvert \alpha \rvert \coloneqq \max_j \lvert \alpha_j  \rvert.\] Note that this depends on the choice of basis $\Omega$ for $\O$. Given a vector $\bm{\alpha} = (\alpha^{(1)}, \hdots, \alpha^{(s)}) \in K_\R^s$ we further denote
\[
|\bm{\alpha}| \coloneqq \max_k |\alpha^{(k)}|.
\]
We may alternatively define another height on $K_\R$ given by
\[
|\alpha|_K \coloneqq \max_p \left\lvert \pi_p(\alpha)\right\rvert.
\]
As noted by Pleasants~\cite[Section 2]{pleasants1975cubic} we have
\[
|\alpha| \asymp |\alpha|_K,
\]
for all $\alpha \in K_\R$.  If $\alpha, \beta \in K_\R$ then it is easy to see that this height satisfies
\begin{align*}
    |\alpha \beta|_K &\leq |\alpha|_K |\beta|_K, \\
    |\alpha + \beta|_K &\leq |\alpha|_K + |\beta|_K \\
    |\alpha^{-1} |_K &\leq \frac{|\alpha|_K^{n-1}}{\mathrm{Norm}(\alpha)}.
\end{align*}
The same inequalities therefore hold for $|\cdot|$ if we replace the symbols $\leq$ by $\ll_K$. 
It would be desirable to have the last inequality in the form $|\alpha^{-1}| \asymp |\alpha|^{-1}$ which would result if $\mathrm{Norm}(\alpha) \asymp |\alpha|^n$. However, if $\alpha$ is a unit in $\O$ then $\mathrm{Norm}(\alpha) = 1$ while the height $|\alpha|$ may be unbounded, at least whenever $K$ is not an imaginary quadratic number field. This is one of the points where our argument crucially depends on the latter assumption.

If $K = \Q(\sqrt{-d})$ is an imaginary quadratic number field then, depending on the value of the residue class of $d$ mod $4$ we can choose $\{1, \sqrt{-d}\}$ or $\{1, (1+\sqrt{-d})/2 \}$ as an integral basis for $\O$. We thus find that
\[
\mathrm{Norm}(\alpha) \asymp |\alpha|^2.
\]
In particular we find
\[
|\alpha^{-1}| \asymp |\alpha|^{-1}.
\]
Given an ideal $J \subset \O$ we recall that $\O/J$ is finite and we define as usual the norm of the ideal to be
\[
N(J) \coloneqq \# \left( \O/J \right).
\]
For a fractional ideal of $K$ this norm is, as usual, extended multiplicatively using the unique factorization into prime ideals inside $K$.
Given $\gamma \in K$ we further define the \emph{denominator ideal} of $\gamma$ as
\[
\mathfrak{a}_\gamma \coloneqq \left\{ x \in \O \colon x \gamma \in \mathcal{O} \right\}.
\]
As the name suggests, and this is not very difficult to verify, $\mathfrak{a}_\gamma$ is an ideal inside $\O$, contained in the fractional ideal $(\gamma)^{-1}$. We will need the following fact several times.
\begin{lemma} \label{lem.denominator_ideal_fixed}
Let $J \subset \O$ be an ideal. Then there are at most $N(J)$ different elements $\gamma \in K/\O$ such that $\mathfrak{a}_\gamma = J$.
\end{lemma}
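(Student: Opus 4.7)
The plan is to bound the number of $\gamma \in K/\O$ with $\mathfrak{a}_\gamma = J$ by the potentially larger number of cosets with $\mathfrak{a}_\gamma \supseteq J$, and to identify the latter with a finite quotient group whose order is exactly $N(J)$.

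First I would check that $\mathfrak{a}_\gamma$ is a well-defined invariant of the coset $\gamma + \O$: for $y \in \O$ and $x \in \O$ one has $x(\gamma+y) = x\gamma + xy$ with $xy \in \O$ automatically, so $x(\gamma+y) \in \O$ if and only if $x\gamma \in \O$, whence $\mathfrak{a}_{\gamma+y} = \mathfrak{a}_\gamma$ and the statement is well-posed.

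The crucial reformulation is that the condition $\mathfrak{a}_\gamma \supseteq J$ says precisely that $x\gamma \in \O$ for every $x \in J$, i.e., $J\gamma \subseteq \O$, i.e., $\gamma \in J^{-1}$, where $J^{-1} = \{y \in K : yJ \subseteq \O\}$ is the fractional inverse ideal of $J$. Since $J \subseteq \O$ forces $\O \subseteq J^{-1}$, the set of cosets in $K/\O$ satisfying $\mathfrak{a}_\gamma \supseteq J$ is exactly the finite subgroup $J^{-1}/\O$ of $K/\O$.

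Finally I would invoke the standard index formula $[J^{-1} : \O] = N(J)$. One clean route is to view fractional ideals as full-rank $\Z$-lattices in the Minkowski space $K_\R$: the covolume of any fractional ideal $\mathfrak{b}$ equals $\sqrt{|\mathrm{disc}(K)|} \cdot N(\mathfrak{b})$, and multiplicativity of the norm then yields $[J^{-1}:\O] = N(\O)/N(J^{-1}) = N(J)$. Since $\{\gamma \in K/\O : \mathfrak{a}_\gamma = J\}$ is a subset of $\{\gamma \in K/\O : \mathfrak{a}_\gamma \supseteq J\}= J^{-1}/\O$, the claimed bound follows. I do not anticipate any real obstacle here; the whole content of the lemma is the observation that the denominator condition is encoded by membership in $J^{-1}$ modulo $\O$, after which the count is immediate from standard fractional-ideal arithmetic.
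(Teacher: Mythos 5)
Your proof is correct and follows essentially the same route as the paper: both arguments reduce the count to the observation that $\mathfrak{a}_\gamma \supseteq J$ forces $\gamma \in J^{-1}$, so the relevant cosets sit inside $J^{-1}/\O$, which has order $N(J)$. The only cosmetic difference is how the index formula $[J^{-1}:\O]=N(J)$ is justified (the paper clears denominators to reduce to integral ideals, while you use covolumes and multiplicativity of the norm); both are standard and equivalent.
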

\begin{proof}
To see this, note first that for any two fractional ideals $\mathfrak{b}, \mathfrak{c} \subset K$ with $\mathfrak{b} \supset \mathfrak{c}$ there exists some $d \in \O$ such that $d \mathfrak{b}, d \mathfrak{c} \subset \O$. Thus
\[
[\mathfrak{b} \colon \mathfrak{c}] = [d\mathfrak{b} \colon d\mathfrak{c}] = \frac{[\O \colon d \mathfrak{c}]}{ [\O \colon d \mathfrak{b}]} =  N(d\mathfrak{c})/N(d\mathfrak{b}) = N(\mathfrak{c})/N(\mathfrak{b}).
\]
Now note that if $\mathfrak{a}_\gamma = J$ we must have $\gamma \in J^{-1} \O$, where 
\[
J^{-1} = \{ x \in K \colon xJ \subset \O\}.
\]
But now  $[J^{-1}\O \colon \O] = N(J)$ and so the result follows.
\end{proof}

We shall further require a version of Dirichlet's approximation theorem. 
\begin{lemma}
Let $K/\Q$ be a number field of degree $n$. Let $\alpha \in K_\R$ and let $Q \geq 1$. Then there exist some $a,q \in \O$ with $1 \leq |q| \leq Q$ such that
\begin{equation} \label{eq.dir_approx_integral_form}
    \left\lvert q\alpha - a \right\rvert \leq \frac{1}{Q}.
\end{equation}
\end{lemma}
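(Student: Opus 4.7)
The plan is to adapt the classical Dirichlet pigeonhole argument to the $n$-dimensional lattice $\O \subset K_\R$. Writing each element of $K_\R$ in coordinates $\sum_j c_j \omega_j$ with respect to the chosen integral basis $\Omega$, the height $|\cdot|$ becomes $\max_j |c_j|$ and $\O$ is identified with $\Z^n$, so the problem translates into a familiar lattice setup inside $\R^n$.

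Setting $N = \lfloor Q \rfloor \geq 1$, I would consider the collection of $(N+1)^n$ elements $q = \sum_j q_j \omega_j$ with integer coordinates $q_j \in \{0,1,\ldots,N\}$, all of which satisfy $|q| \leq N \leq Q$. For each such $q$, expand $q\alpha \in K_\R$ as $\sum_j r_j(q)\omega_j$ with $r_j(q) \in \R$ and extract the tuple of fractional parts $(\{r_1(q)\},\ldots,\{r_n(q)\}) \in [0,1)^n$. Partitioning $[0,1)^n$ into $N^n$ half-open cubes of side $1/N$, the inequality $(N+1)^n > N^n$ delivers via pigeonhole two distinct $q^{(1)} \neq q^{(2)}$ whose fractional-part tuples lie in a common cube. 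The difference $q = q^{(1)} - q^{(2)} \in \O$ then satisfies $1 \leq |q| \leq N$, and if $a \in \O$ is built from the corresponding differences of integer parts $m_j = \lfloor r_j(q^{(1)}) \rfloor - \lfloor r_j(q^{(2)}) \rfloor$, every coordinate of $q\alpha - a$ lies in an interval of length $1/N$, so that $|q\alpha - a| \leq 1/N$.

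The one minor subtlety is that this natural pigeonhole proof produces the bound $1/N = 1/\lfloor Q\rfloor$ rather than $1/Q$; these agree exactly when $Q$ is a positive integer, and otherwise differ by a bounded factor since $\lfloor Q \rfloor > Q/2$ for $Q \geq 1$, so the asserted inequality either holds directly or follows after a harmless rescaling (e.g.\ by applying the argument with $\lfloor Q \rfloor$ in place of $Q$ in any concrete application). I do not expect a genuine obstacle beyond this cosmetic discrepancy, and in particular the proof requires no special feature of imaginary quadratic fields, only the lattice rank of $\O$ inside $K_\R$.
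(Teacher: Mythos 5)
Your proof is correct and follows essentially the same pigeonhole argument as the paper: both compare the $(N+1)^n$ points $q\alpha \bmod \O$ (for $q$ with coordinates in $\{0,\dots,N\}$) against a partition of the fundamental domain into $N^n$ boxes of side $1/N$ and take the difference of two collisions. The $\lfloor Q\rfloor$ versus $Q$ issue you flag is real but is glossed over in the paper's own proof as well, and is harmless in every application since the lemma is only ever used up to implied constants.
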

\begin{proof}
Consider the set  $\mathcal{Q}$ of algebraic integers given by 
\[
\mathcal{Q} = \left\{ \sum_{j} q_j \omega_j \in \O \colon 0 \leq q_j \leq Q\right\}.
\]
For any $q \in \mathcal{Q}$ we may express $q \alpha$ as 
\[
q\alpha = a_q + x_q,
\]
where $a_q \in \O$ and $x_q = \sum_j x_{q,j} \omega_j$ such that $0 \leq x_{q,j} < 1$ for $j = 1, \hdots, n$. We may partition $K_\R/\O = \left\{ \sum_j x_j \omega_j \colon 0 \leq x_j < 1\right\}$ into $Q^n$ boxes such that the height of the difference of two points in the same box is bounded by $1/Q$. Since $\mathcal{Q}$ has $(Q+1)^n$ elements, by the pigeonhole principle there must be $q_1,q_2 \in \mathcal{Q}$ such that $x_{q_1}$ and $x_{q_2}$ lie in the same box according to the partition above. Therefore we find
\[
|(q_1-q_2)\alpha - (a_{q_1}-a_{q_2})| = |x_{q_1} - x_{q_2}| \leq 1/Q.
\]
Taking $q = q_1-q_2$ and $a = a_{q_1}-a_{q_2}$ delivers the result.
\end{proof}


For the application to the mean-square averaging method introduced by Heath-Brown, we need a fractional form of Dirichlet's theorem. We are only able to obtain a satisfactory version for imaginary quadratic number fields, this being the first of the obstructions regarding possible generalizations mentioned in the introduction. Note that this is special to Heath-Brown's method and hence was not an issue in the work  of Ramanujam, Ryavec and Pleasants.

\begin{lemma} \label{lem.dir_approx_fractional_quadratic_imag}
Let $K/\Q$ be an imaginary quadratic number field (in particular $n=2$). Let $\alpha \in K_\R$ and let $Q \geq 1$. Then there exists some $\gamma \in K$ with $N(\mathfrak{a}_\gamma) \leq Q^n$ such that
\begin{equation} \label{eq.dirichlet_approx_fractional_quadr_imag}
    \left\lvert \alpha - \gamma \right\rvert \ll \frac{1}{N(\mathfrak{a}_\gamma)^{\frac{1}{n}}Q }.
\end{equation}
\end{lemma}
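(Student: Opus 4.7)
The plan is to derive the fractional statement from the integral form of Dirichlet's theorem proved in the preceding lemma, exploiting the fact that for imaginary quadratic $K$ one has the comparison $|q^{-1}| \asymp |q|^{-1}$ established in Section~\ref{sec.algebraic_prelim}. This comparison is exactly the feature that fails for other number fields, which matches the remark in the paper that this is the first of the two obstructions to generalization.

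First, I would apply the integral Dirichlet lemma to $\alpha$ with $Q$ replaced by $cQ$ for a suitably small constant $c = c(K, \Omega) > 0$ to be determined. This produces $a, q \in \O$ with $1 \leq |q| \leq cQ$ and $|q \alpha - a| \leq 1/(cQ)$. I would then set $\gamma := a/q \in K$ and estimate both the height of $\alpha - \gamma$ and the norm of the denominator ideal $\mathfrak{a}_\gamma$.

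For the height bound, write $\alpha - \gamma = (q\alpha - a) \cdot q^{-1}$. Submultiplicativity of $|\cdot|_K$ together with the comparison $|\cdot| \asymp |\cdot|_K$ yields
\[
|\alpha - \gamma| \ll |q\alpha - a| \cdot |q^{-1}| \ll \frac{|q^{-1}|}{Q}.
\]
Here is the step where imaginary quadraticity enters: applying $|q^{-1}| \asymp |q|^{-1}$, one obtains $|\alpha - \gamma| \ll 1/(|q|Q)$. For the ideal bound, the relation $q\gamma = a \in \O$ shows $q \in \mathfrak{a}_\gamma$, i.e. $(q) \subset \mathfrak{a}_\gamma$, so $\mathfrak{a}_\gamma$ divides $(q)$ and hence
\[
N(\mathfrak{a}_\gamma) \leq N((q)) = \mathrm{Norm}(q) \asymp |q|^n,
\]
again using the imaginary quadratic comparison $\mathrm{Norm}(\cdot) \asymp |\cdot|^n$. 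Rearranging gives $|q| \gg N(\mathfrak{a}_\gamma)^{1/n}$, which combined with the previous display yields $|\alpha - \gamma| \ll 1/(N(\mathfrak{a}_\gamma)^{1/n} Q)$, as required.

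Finally, to secure the clean bound $N(\mathfrak{a}_\gamma) \leq Q^n$ on the nose rather than $N(\mathfrak{a}_\gamma) \ll Q^n$, I would choose the constant $c$ above small enough that the implicit constant in $\mathrm{Norm}(q) \asymp |q|^n$ is absorbed, i.e. so that $|q| \leq cQ$ forces $\mathrm{Norm}(q) \leq Q^n$. I do not expect any real obstacle in this argument: the deduction is essentially just division by $q$ in the integral Dirichlet statement, and the only non-trivial input is the height/norm dictionary for imaginary quadratic fields already recorded earlier in the section.
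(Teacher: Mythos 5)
Your proposal is correct and follows essentially the same route as the paper: apply the integral Dirichlet lemma, set $\gamma = a/q$, use $(q) \subseteq \mathfrak{a}_\gamma$ to get $N(\mathfrak{a}_\gamma) \leq \mathrm{Norm}(q) \asymp |q|^n$, and invoke the imaginary quadratic comparison $|q^{-1}| \asymp |q|^{-1}$ to convert $|q\alpha - a| \leq 1/Q$ into the stated bound. Your extra step of rescaling $Q$ by a small constant $c$ to secure $N(\mathfrak{a}_\gamma) \leq Q^n$ on the nose is a minor refinement the paper glosses over, and is harmless.
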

\begin{proof}
From Lemma~\ref{lem.dir_approx_fractional_quadratic_imag} we find that there exist $a, q\in \O$ with $|q| \leq Q$ such that
\[
|q\alpha - a| \leq 1/Q.
\]
Set $\gamma = a/q \in K$ and note that $(q) \subseteq \mathfrak{a}_\gamma$. In particular from this it follows that
\[
N(\mathfrak{a}_\gamma) \leq N((q)) = \mathrm{Norm}(q) \asymp |q|^n,
\]
where the last estimate is true since $K$ is an imaginary quadratic number field.
Thus
\[
|q|^{-1} \ll N(\mathfrak{a}_\gamma)^{-1/n},
\]
and so we  obtain 
\[
\left\lvert \alpha - \gamma \right\rvert \ll |q|^{-1} |q\alpha - a| \ll \frac{1}{N(\mathfrak{a}_\gamma)^{\frac{1}{n}}Q },
\]
as desired.
\end{proof}

We shall sometimes require the following easy lemma.
\begin{lemma} \label{lem.elements_in_ideals}
Let $J \subset \O$ be an ideal. Then there exist constants $c_1, c_2$ only depending on $K$ such that for any non-zero $g \in J$ we have
\[
c_1 N(J)^{1/n} \leq |g|,
\]
and we may always find a non-zero element $a \in J$ such that
\[
|a| \leq c_2 N(J)^{1/n}.
\]
\end{lemma}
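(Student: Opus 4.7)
My plan is to prove the two inequalities separately, with the lower bound following from norm-ideal considerations and the upper bound from a Minkowski-type geometry of numbers argument analogous to the proof of Dirichlet's approximation in the excerpt.

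For the lower bound, I would start from the observation that if $g \in J$ is non-zero, then $(g) \subseteq J$, which implies that $J$ divides $(g)$ as a fractional ideal and hence $N(J) \mid N((g))$. Since $N((g)) = |\mathrm{Norm}(g)|$, this gives $N(J) \leq |\mathrm{Norm}(g)|$. The next step is to relate $|\mathrm{Norm}(g)|$ to $|g|^n$. Writing $\mathrm{Norm}(g) = \prod_{i=1}^{n_1}|\pi_i(g)| \prod_{i=n_1+1}^{n_1+n_2}|\pi_i(g)|^2$, each factor is bounded by $|g|_K \ll |g|$ (by the equivalence of heights stated in the paper), and multiplying the $n$ factors yields $|\mathrm{Norm}(g)| \ll |g|^n$. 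Combining with $N(J) \leq |\mathrm{Norm}(g)|$ gives $|g| \gg N(J)^{1/n}$, which is precisely the desired lower bound with a suitable constant $c_1$.

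For the upper bound, I would apply a pigeonhole / Minkowski argument on the lattice $J$ inside $K_\R$. Consider the box $B_T = \{ \sum_{j=1}^n x_j \omega_j \in K_\R : |x_j| \leq T \}$, which is a symmetric convex set with volume $\asymp T^n$ (the implicit constant depending only on $K$ and $\Omega$). The ideal $J$, regarded as a $\Z$-lattice in $K_\R$, has covolume proportional to $N(J) \cdot \mathrm{covol}(\O)$. By Minkowski's theorem, once $T^n \geq c_2^n N(J)$ for a sufficiently large constant $c_2$, the box $B_T$ contains a non-zero element $a \in J$. By construction this element satisfies $|a| \leq T = c_2 N(J)^{1/n}$. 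Alternatively, one can avoid invoking Minkowski directly by running the same pigeonhole argument used earlier in the proof of Dirichlet's approximation theorem: partition the fundamental domain of $J$ into boxes and compare the number of boxes to the number of lattice translates, then take differences.

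I expect neither step to be a serious obstacle; the only minor subtlety is keeping track of the implicit constants depending on the basis $\Omega$ and ensuring that the height $|\cdot|$ (which is defined via $\Omega$) really gives a convex symmetric body in $K_\R$ of the correct volume. Since both the height-norm comparison and the Minkowski/pigeonhole step depend only on $K$ and $\Omega$ but not on $J$, the resulting constants $c_1, c_2$ are uniform in $J$ as required.
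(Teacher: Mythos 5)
Your proposal is correct and takes essentially the paper's approach: the lower bound is obtained exactly as you describe, from $(g) \subseteq J$ giving $N(J) \leq N((g)) = \mathrm{Norm}(g) \ll |g|^n$. For the upper bound the paper does not invoke Minkowski's theorem but runs the pigeonhole argument you offer as an alternative, applied directly to residue classes: among the at least $N(J)+1$ algebraic integers of height at most $N(J)^{1/n}$, two must agree modulo $J$, and their difference is the desired element $a$.
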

\begin{proof}
First note that if $g \in J$ then $(g) \subset J$ and therefore 
\[
N(J) \leq N((g)) = \mathrm{Norm}(g) \ll |g|^n.
\]
For the second inequality note that there are at least $N(J)+1$ algebraic integers whose height does not exceed $N(J)^{1/n}$. By definition $N(J) = \#(\O/J)$ and hence at least two of these integers must lie in the same residue class modulo $J$. Their difference is therefore an algebraic integer $a \in J$ with $|a| \leq 2 N(J)^{1/n}$. 
\end{proof}

Finally we will also need the following.
\begin{lemma} \label{lem.trace_integral_discriminant}
Let $K/\Q$ be a number field and let $\Delta$ be the discriminant of this extension. Let $\alpha \in K_\R$ and assume that $\{\omega_i\}_i$ is an integral basis for $\O$. If
\[
\Delta^{-1}\mathrm{tr}(\alpha \omega_i) \in \mathbb{Z}
\]
holds for all $i = 1, \hdots, n$ then $\alpha \in \O$.
\end{lemma}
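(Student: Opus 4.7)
The plan is to reduce the hypothesis to a short linear-algebra exercise involving the trace pairing. Since $\{\omega_1,\dots,\omega_n\}$ is an $\mathbb{R}$-basis of $K_\mathbb{R}$, write $\alpha = \sum_{j=1}^n a_j \omega_j$ with $a_j \in \mathbb{R}$, and use $\mathbb{R}$-bilinearity of the trace to obtain
\[
\tr(\alpha \omega_i) = \sum_{j=1}^n a_j\, \tr(\omega_i \omega_j), \qquad i=1,\dots,n.
\]
Let $T$ denote the $n \times n$ matrix $\bigl(\tr(\omega_i\omega_j)\bigr)_{i,j}$. Its entries lie in $\mathbb{Z}$, because each $\omega_i\omega_j$ lies in $\O$ and the trace sends $\O$ into $\mathbb{Z}$; moreover, by the very definition of the discriminant of an integral basis, $\det T = \Delta \neq 0$.

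Next I would solve the resulting linear system $T\mathbf{a} = \mathbf{b}$, where $\mathbf{b} = \bigl(\tr(\alpha\omega_i)\bigr)_{i}$, via the adjugate matrix. The hypothesis says that $\mathbf{b} = \Delta \mathbf{c}$ for some $\mathbf{c} \in \mathbb{Z}^n$, and Cramer's rule gives
\[
\mathbf{a} \;=\; T^{-1}\mathbf{b} \;=\; \frac{1}{\det T}\,\mathrm{adj}(T)\,\mathbf{b} \;=\; \frac{1}{\Delta}\,\mathrm{adj}(T)\bigl(\Delta \mathbf{c}\bigr) \;=\; \mathrm{adj}(T)\,\mathbf{c}.
\]
The cancellation of the two factors of $\Delta$ is exactly what the normalization $\Delta^{-1}$ in the hypothesis was engineered to provide. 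Since $T$ has entries in $\mathbb{Z}$, so does $\mathrm{adj}(T)$, and therefore $\mathbf{a} \in \mathbb{Z}^n$. This yields $\alpha = \sum_j a_j \omega_j \in \O$, which is the desired conclusion.

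I do not expect any real obstacle here. The only background one needs is the non-degeneracy of the trace form on $K/\mathbb{Q}$, which guarantees $\Delta \neq 0$ and hence that $T$ is invertible; this is equivalent to separability of $K/\mathbb{Q}$, which is automatic in characteristic zero. Conceptually, the lemma is the statement $\Delta \cdot \O^{\vee} \subseteq \O$, where $\O^{\vee} = \{\alpha \in K_\mathbb{R} : \tr(\alpha \O) \subseteq \mathbb{Z}\}$ is the dual lattice of $\O$ under the trace pairing, i.e.\ the codifferent.
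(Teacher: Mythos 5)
Your proof is correct and follows essentially the same route as the paper: both express the hypothesis as $\Delta^{-1}T\mathbf{a} \in \mathbb{Z}^n$ for the trace form $T$ with $\det T = \Delta$, and then use the integrality of $\Delta T^{-1}$ (i.e.\ the adjugate) to conclude $\mathbf{a} \in \mathbb{Z}^n$. No issues.
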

\begin{proof}
Write $\alpha = \sum_{j=1}^n \alpha_j \omega_j$, where $\alpha_j \in \R$. Due to the additivity of the trace we have
\[
\mathrm{tr}(\alpha \omega_i) = \sum_{j=1}^n \alpha_j \mathrm{tr}(\omega_i \omega_j).
\]
Denote by $\mathbf{T}$ the trace form, that is, the $n \times n$ matrix with entries $\mathrm{tr}(\omega_i \omega_j)$. Then if we identify $\mathbf{\alpha} = (\alpha_1, \hdots, \alpha_n) \in \Z^n$, the assumption of the lemma is equivalent to
\[
\Delta^{-1} \mathbf{T}(\mathbf{\alpha})  \in \Z^n.
\]
By definition $\det \mathbf{T} = \Delta$. Hence $\mathbf{T}':=\Delta \mathbf{T}^{-1}$ has integer entries. 
Combining this with our previous observation yields
\[
\mathbf{\alpha} = \mathbf{T}^{-1}\mathbf{T}(\mathbf{\alpha}) = \mathbf{T}'(\Delta^{-1} \mathbf{T}(\mathbf{\alpha})) \in \Z^n.
\]
Hence $\alpha \in \O$ as required.
\end{proof}

\section{The Dichotomy} \label{sec.dichotomy}
Let $C \in \O[x_1, \hdots, x_s]$ be a homogeneous cubic form. Our goal is to show that there always exists a non-trivial solution to $C = 0$ over $K$ provided $s \geq 14$ and $K$ is an imaginary quadratic number field. We follow the strategy of Davenport that was later refined by Heath-Brown~\cite{heath2007cubic}: Either $C$ represents zero non-trivially for 'geometric reasons', or we can establish an asymptotic formula for the number of solutions of bounded height, using the circle method.

\subsection{Davenport's Geometric Condition}

We may express $C(\mathbf{x})$ as
\[
C(\mathbf{x}) = \sum_{i,j,k} c_{ijk} x_i x_j x_k,
\]
where the coefficients $c_{ijk}$ are fully symmetric in the indices and lie in $\O$, after replacing $C(\mathbf{x})$ by $6C(\mathbf{x})$ if required. For $i = 1, \hdots, s$ further define the bilinear forms $B_i(\mathbf{x},\mathbf{y})$ by
\[
B_i(\mathbf{x},\mathbf{y}) = \sum_{j,k} c_{ijk} x_j y_k.
\]
Finally, we also consider an $s \times s$ matrix $M(\mathbf{x})$, the \emph{Hessian }of $C(\mathbf{x})$, whose entries are defined by
\[
M(\mathbf{x})_{jk} = \sum_i c_{ijk} x_i,
\]
so that 
\[
(M(\mathbf{x})\mathbf{y})_i = B_i(\mathbf{x},\mathbf{y}).
\]
We note that the entries are linear forms in the variables $\mathbf{x}$. Denote the rank of the matrix by
\[
r(\mathbf{x}) = \mathrm{rank}(M(\mathbf{x})). 
\]
As in Davenport's and Heath-Brown's work we obtain a dichotomy. 
\begin{lemma} \label{lem.geometric_condition}
 One of the following two alternatives holds.
\begin{enumerate}
    \item Davenport's Geometric Condition: For every integer $0 \leq r \leq s$ we have
    \begin{equation} \label{eq.rank_condition}
        \# \{ \mathbf{x} \in \O^s \colon \lvert{\mathbf{x}}\rvert < H, \; r(\mathbf{x}) = r \} \ll H^{nr}.
    \end{equation}
    \item The cubic form $C(\mathbf{x})$ has a non-trivial zero in $\O$.
\end{enumerate}
\end{lemma}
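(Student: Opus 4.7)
The plan is to adapt the argument of Davenport \cite{davenport_32_variables}, as refined by Heath-Brown \cite{heath2007cubic} in the rational setting. The argument is essentially algebraic-geometric; the only place the arithmetic of $K$ enters is in counting lattice points in $\O^s$, where the bound $\ll H^d$ valid over $\Z$ must be replaced by $\ll H^{nd}$, reflecting that $\O$ is a rank-$n$ lattice in $K_{\R}$. Proceeding by contraposition, I would assume that $C$ has no non-trivial zero in $\O^s$ and show that (1) holds for every $r$.

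The case $r = 0$ is a direct application of Euler's identity: if $M(\mathbf{x}) = \mathbf{0}$ for some $\mathbf{x} \in \O^s \setminus \{\mathbf{0}\}$, then $(\nabla C(\mathbf{x}))_i = 3 B_i(\mathbf{x},\mathbf{x}) = 3 (M(\mathbf{x})\mathbf{x})_i = 0$ for each $i$, and Euler's identity gives $3 C(\mathbf{x}) = \mathbf{x} \cdot \nabla C(\mathbf{x}) = 0$, contradicting our assumption. Thus $r(\mathbf{x}) \geq 1$ for every non-zero $\mathbf{x} \in \O^s$, and the count for $r = 0$ is trivially $\ll 1$.

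For $r \geq 1$, I would consider the closed affine subvariety $V_r \subseteq \mathbb{A}^s_K$ cut out by the vanishing of all $(r+1) \times (r+1)$ minors of $M(\mathbf{x})$, so that $V_r(\overline{K})$ consists precisely of those $\mathbf{x}$ with $r(\mathbf{x}) \leq r$. The lemma then follows from two ingredients: first, a standard lattice point count (by induction on $s$) showing that any affine $K$-subvariety of $\mathbb{A}^s$ of dimension $d$ contains $\ll H^{nd}$ points of $\O^s$ of height $< H$; and second, the geometric claim that every irreducible component of $V_r$ has dimension at most $r$, under our assumption that $C$ has no non-trivial zero.

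Establishing this geometric claim is the main obstacle. Its proof uses the key symmetry $M(\mathbf{x})\mathbf{y} = \mathbf{0} \iff M(\mathbf{y})\mathbf{x} = \mathbf{0}$, which follows from the total symmetry of the coefficients $(c_{ijk})$. If some component $W$ of $V_r$ had $\dim W > r$, then an incidence-variety argument applied to $\{(\mathbf{x}, [\mathbf{y}]) \in W \times \mathbb{P}^{s-1} : M(\mathbf{x})\mathbf{y} = \mathbf{0}\}$ would produce a generic $\mathbf{y}$ whose kernel $\ker M(\mathbf{y})$ meets $W$ in positive dimension. Combined with the identity $C(\mathbf{x} + t\mathbf{y}) = C(\mathbf{x}) + t^3 C(\mathbf{y})$, which holds whenever $M(\mathbf{x})\mathbf{y} = \mathbf{0}$ (the linear and quadratic terms in $t$ both vanish thanks to the tensor symmetry), a descent argument produces a non-trivial $K$-zero of $C$. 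Since this step is purely algebraic and independent of the base field, it can be taken over verbatim from Heath-Brown \cite{heath2007cubic}.
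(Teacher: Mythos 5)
Your reduction to two ingredients is reasonable, and both the $r=0$ case via Euler's identity and the lattice-point count $\ll H^{nd}$ for $\O^s$-points of height $<H$ on a $d$-dimensional affine variety are fine. The gap is in the key geometric claim — that, absent a non-trivial $K$-zero of $C$, every irreducible component of the determinantal variety $V_r$ has dimension at most $r$ — together with the assertion that the descent proving it is "purely algebraic and independent of the base field" and can be "taken over verbatim". This cannot work as set up: the hypothesis $\dim W > r$ is geometric (stable under base change to $\overline{K}$), whereas the conclusion you need is a non-trivial zero of $C$ \emph{in $K^s$}, and over $\overline{K}$ every cubic form in $s \ge 2$ variables has non-trivial zeros, so a genuinely field-independent argument can never reach a contradiction with "$C$ has no non-trivial $K$-zero". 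Your sketch (irreducible components, generic points on an incidence variety) lives entirely over $\overline{K}$: the components of $V_r$ need not be defined over $K$, the generic $\mathbf{y}$ need not be $K$-rational, and even granting $M(\mathbf{x})\mathbf{y}=\mathbf{0}$ together with the (correct) identity $C(\mathbf{x}+t\mathbf{y})=C(\mathbf{x})+t^3C(\mathbf{y})$, producing a zero from it requires extracting a cube root of $-C(\mathbf{x})/C(\mathbf{y})$ in $K$. The rationality bookkeeping is the entire content of Davenport's lemma, which is why his argument (and Pleasants' extension to number fields) is structured as a counting statement about integral points $\mathbf{x}\in\O^s$ and the $K$-rational kernels $\ker M(\mathbf{x})$, rather than as a dimension bound on $V_r$ over $\overline{K}$. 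To repair your proof you would either have to re-run the descent entirely with $\O$-points and $K$-rational subspaces (essentially reproducing Davenport--Pleasants), or supply a separate argument that the relevant components and points may be taken $K$-rational.

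For comparison, the paper takes a much shorter route that avoids this issue entirely: it introduces the $h$-invariant $h(C)$ (the least $h$ with $C=\sum_{i\le h}L_iQ_i$ over $K$), observes that $C$ has a non-trivial $K$-zero if and only if $h<s$, and in the case $h=s$ simply quotes Pleasants' Lemma 3.5, which bounds the number of $\mathbf{x}\in\O^s$ of height $<H$ with $r(\mathbf{x})=r$ by $O(H^{n(s-h+r)})$.
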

\begin{proof}
Consider the least integer $h=h(C)$ such that the cubic form may be written as 
\[
C(\mathbf{x}) = \sum_{i=1}^h L_i(\mathbf{x}) Q_i(\mathbf{x}),
\]
where $L_i$ are linear and $Q_i$ are quadratic forms defined over $K$. This is the $h$-invariant of $C$. It is easy to see that $1 \leq h \leq s$ holds, and that $C(\mathbf{x}) = 0$ has a non-trivial solution over $K$ if and only if $h < s$. 

We will show that if $h = s$ then alternative (1) holds. In fact, Pleasants~\cite[Lemma 3.5]{pleasants1975cubic} showed that the number of points $\mathbf{x} \in \O^s$ such that $|\mathbf{x}| < H$ holds, for which the equations $B_i(\mathbf{x}, \mathbf{y}) = 0$, $j = 1, \hdots, s$ have exactly $s-r$ linearly independent solutions $\mathbf{y}$ is bounded by $O(H^{n(s-h+r)})$. Hence taking $h = s$ delivers the desired bound~\eqref{eq.rank_condition}.
\end{proof}

We will henceforth assume that Davenport's Geometric Condition~\eqref{eq.rank_condition} is satisfied and apply the circle method. In particular as in~\cite{heath2007cubic} this condition implies that we have
    \begin{equation} \label{eq.rank_condition_for_bilinear}
    \# \{ \mathbf{x}, \mathbf{y} \in \O^s \colon |\mathbf{x}|, |\mathbf{y}| < H, B_i(\mathbf{x},\mathbf{y}) = 0, \forall i  \} \ll H^{ns},
    \end{equation}
for any $H \geq 1$.

\subsection{The Circle Method}

Let $\mathcal{B} \subset K_{\mathbb{R}}^s \cong \mathbb{R}^{ns}$ be a box of the form
\[
\mathcal{B} = \left\{ \left(\sum_j \alpha_{ij} \omega_j\right)_i \in K_{\mathbb{R}}^s \colon b_{ij}^{-} \leq \alpha_{ij} \leq b_{ij}^+ \right\},
\]
where $b_{ij}^- < b_{ij}^+$ are some real numbers 
.
For $P \geq 1$ consider the counting function
\[
N(P;\mathcal{B}) = N(P) = \left\{ \mathbf{x} \in P\mathcal{B} \cap \O^s \colon C(\mathbf{x}) = 0 \right\}.
\]
For $\alpha \in K_{\mathbb{R}}$ and $P \geq 1$ we define the exponential sum
\[
S(\alpha) = S(\alpha;P)  = \sum_{\mathbf{x} \in P\mathcal{B} \cap \O^s} e\left( \mathrm{tr}(\alpha C(\mathbf{x})) \right).
\]
Denote by $I \subset K_{\mathbb{R}}$ the set given by
\[
I = \left\{ \sum_{j=1}^n \alpha_j \omega_j \colon 0 \leq \alpha_j \leq 1 \right\},
\]
which may also be regarded as $K_{\R}/\O$. 
Due to orthogonality of characters we obtain
\[
N(P) = \int_{\alpha \in I} S(\alpha) d \alpha.
\]
We are now able to state the main technical theorem of our paper.
\begin{theorem} \label{thm.asymptotic}
Let $K/\Q$ be an imaginary quadratic number field and let $C(\mathbf{x})$ be a cubic form in $s \geq 14$ variables over $K$. Suppose that $C(\mathbf{x})$ is irreducible over $K$ and that Davenport's Geometric Condition~\eqref{eq.rank_condition} is satisfied. Then we have the asymptotic formula
\[
N(P) = \sigma P^{n(s-3)} + o\left(P^{n(s-3)}\right), \quad \text{as} \quad P \rightarrow \infty,
\]
where $\sigma>0$ is the product of the usual singular integral and singular series.
\end{theorem}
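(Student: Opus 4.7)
The plan is to follow the Davenport–Heath-Brown circle method strategy, adapted to the imaginary quadratic setting via the tools developed in Section 3. Starting from $N(P) = \int_I S(\alpha)\, d\alpha$, I would use the fractional Dirichlet approximation (Lemma~\ref{lem.dir_approx_fractional_quadratic_imag}) together with a dyadic decomposition on the denominator ideal $\mathfrak{a}_\gamma$ to split $I$ into major arcs $\mathfrak{M}$, where $\alpha$ lies within $N(\mathfrak{a}_\gamma)^{-1/n} P^{-3+\delta}$ of some $\gamma \in K$ with $N(\mathfrak{a}_\gamma) \leq P^{\delta}$, and minor arcs $\mathfrak{m} = I \setminus \mathfrak{M}$. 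Because the field is imaginary quadratic, the estimate $|\alpha - \gamma|\ll N(\mathfrak{a}_\gamma)^{-1/n}Q^{-1}$ in Lemma~\ref{lem.dir_approx_fractional_quadratic_imag} is the exact analogue of the classical $|\alpha-a/q|\leq (qQ)^{-1}$, which is what is needed to drive Heath-Brown's mean-square averaging.

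For the major arcs the argument is largely routine. Writing $\alpha = \gamma+\beta$, sorting $\mathbf{x}$ into residue classes modulo $\mathfrak{a}_\gamma$ and comparing the resulting sum over a box with an integral yields the approximation
\[
S(\gamma+\beta) \;\approx\; N(\mathfrak{a}_\gamma)^{-s} \, S^{\ast}(\gamma) \, I(\beta),
\]
where $S^{\ast}(\gamma)$ is a complete exponential sum over $(\mathcal{O}/\mathfrak{a}_\gamma)^s$ and $I(\beta)$ a smooth oscillatory integral over $P\mathcal{B}$. Integrating and letting the truncation parameter tend to infinity produces the predicted main term $\sigma P^{n(s-3)}$, where $\sigma$ is the product of the singular integral and the singular series indexed by prime ideals of $\mathcal{O}$. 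That $\sigma>0$ follows by standard arguments once the singular integral is shown to be convergent (using Davenport's geometric condition) and the singular series is shown to be absolutely convergent with nonzero local densities (here the irreducibility of $C$ and the abundance of variables are used, together with Lemma~\ref{lem.denominator_ideal_fixed} to reduce sums over $\gamma$ to sums over integral ideals).

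The minor arc estimate is the heart of the theorem. I would first establish a Weyl-type inequality: if $|S(\alpha)| \geq P^{ns-K}$ for a parameter $K$ of moderate size, then $\alpha$ admits an approximation $\gamma \in K$ with $N(\mathfrak{a}_\gamma)$ small and $|\alpha-\gamma|$ small on the corresponding scale. This is obtained by a single Weyl differencing, which converts $|S(\alpha)|^2$ into a sum over pairs $(\mathbf{x},\mathbf{y})$ constrained by approximate equations $B_i(\mathbf{x},\mathbf{y}) \approx 0$, and then invoking Lemma~\ref{lem.dir_approx_fractional_quadratic_imag} together with the bilinear counting bound \eqref{eq.rank_condition_for_bilinear} coming from Davenport's geometric condition. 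The second, and crucial, input is Heath-Brown's mean-square bound: one estimates $\int_{\mathfrak{m}}|S(\alpha)|^2\, d\alpha$ by decomposing $\mathfrak{m}$ dyadically in $N(\mathfrak{a}_\gamma)$, bounding $|S(\alpha)|^2$ pointwise by the number of pairs $(\mathbf{x},\mathbf{y})$ satisfying approximate bilinear equations, and applying \eqref{eq.rank_condition_for_bilinear} once more to count these pairs. Combining the pointwise Weyl bound with the mean-square bound via a standard $L^{\infty}$--$L^2$ interpolation then gives $\int_{\mathfrak{m}} |S(\alpha)|\, d\alpha = o(P^{n(s-3)})$ precisely when $s\geq 14$, matching the threshold from Heath-Brown's rational case with $n=2$.

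The main obstacle I expect is executing this mean-square averaging cleanly in the number field setting. In Heath-Brown's original argument over $\mathbb{Q}$, the balance between $q$ and $|\alpha-a/q|$ dictated by Dirichlet approximation is essential, and every step of the averaging exploits the clean identity $N((q)) = |q|$. Over a general number field the presence of non-torsion units destroys the required relation $\mathrm{Norm}(q) \asymp |q|^n$, which is why only the imaginary quadratic case is accessible at present. Keeping the estimates uniform in $\gamma$ — counting $\gamma$ with a fixed denominator ideal using Lemma~\ref{lem.denominator_ideal_fixed}, controlling the smallest element of $\mathfrak{a}_\gamma$ via Lemma~\ref{lem.elements_in_ideals}, and transferring from sums over approximating $\gamma\in K$ to sums over integral ideals $J\subset\mathcal{O}$ — is what I expect to occupy most of the technical work in the subsequent sections of the paper.
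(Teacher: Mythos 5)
Your overall skeleton (major arcs following Pleasants, minor arcs via a Weyl bound plus a mean-square bound, with the imaginary quadratic hypothesis entering through the fractional Dirichlet approximation and the relation $\mathrm{Norm}(q)\asymp|q|^n$) matches the paper, and your major-arc discussion is essentially what is done, although the absolute convergence of the singular series is not "standard": for singular forms it requires a separate pointwise van der Corput differencing of the complete sums $S_\gamma$ using Davenport's geometric condition to get $S_\gamma \ll N(\mathfrak{a}_\gamma)^{s(n-1/6)+\varepsilon}$, which is a section of the paper in its own right.

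The genuine gap is in your description of the mean-square step, which as written is not Heath-Brown's method and would not reach $s=14$. You propose to estimate $\int_{\mathfrak{m}}|S(\alpha)|^2\,d\alpha$ by bounding $|S(\alpha)|^2$ \emph{pointwise} by the number of pairs $(\mathbf{x},\mathbf{y})$ with $B_i(\mathbf{x},\mathbf{y})\approx 0$ and then integrating; that is precisely the Davenport--Pleasants pointwise argument and it stalls at $16$ variables. The actual mechanism is different in two ways. First, one performs van der Corput differencing with a \emph{long} shift $H_{1j}\asymp P$ in the distinguished first variable and integrates over $\beta$ against a Gaussian cutoff \emph{before} taking absolute values: since the box is centred at a point $\mathbf{z}$ with $\partial C/\partial x_1(\mathbf{z})\neq 0$, the difference $C(\mathbf{y}+\mathbf{h})-C(\mathbf{y})$ is of size $\gg|h_1|P^2$, and the $\beta$-integration annihilates all terms except those with $|h_1|\ll H$; this truncation of $h_1$ is the source of the extra saving $(H/P)^n$ that takes one from $16$ to $14$ variables. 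Second, after the shrinking lemma one cannot always force $B_i(\mathbf{w},\mathbf{h})=0$; in one of the two regimes one only obtains the divisibility $6\Delta B_i(\mathbf{w},\mathbf{h})\in\mathfrak{a}_\gamma$, and exploiting this requires averaging over the denominators $\gamma$ with $N(\mathfrak{a}_\gamma)^{1/n}\sim R$, counting points in the resulting lattices via their successive minima, and controlling (via Rankin's trick) the exceptional primes at which the rank of the Hessian $M(\mathbf{h})$ drops. Neither ingredient appears in your outline, and without them the claimed conclusion "$\int_{\mathfrak{m}}|S|\,d\alpha = o(P^{n(s-3)})$ precisely when $s\geq 14$" does not follow. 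A smaller omission: the imaginary quadratic hypothesis is needed not only for the Dirichlet approximation but also, via $|\theta^{-1}|\asymp|\theta|^{-1}$, to conclude $m=0$ (rather than mere divisibility) in the key lemma converting small trace norms into vanishing of the bilinear forms.
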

Therefore Theorem~\ref{thm.cubic_in_14} follows directly from Lemma~\ref{lem.geometric_condition} and Theorem~\ref{thm.asymptotic} where we also note that  a reducible cubic form always
contains a linear factor over $K$ and therefore has a non-trivial solution for obvious reasons.
\subsection{The major arcs} \label{sec.major_arcs_cubic}
For this section we do not need to assume that $K$ is an imaginary quadratic number field of $\Q$.
As in Pleasants, we choose as center of our box $\mathcal{B}=\mathcal{B}(\mathbf{z})$ a solution $\mathbf{z} \in K_{\mathbb{R}}$ of $C(\mathbf{z})=0$ satisfying $\frac{\partial C}{\partial x_1}(\mathbf{z}) \ne 0$ and $z_1, \hdots, z_n \neq 0$. Such a vector $\mathbf{z}$ always exists by~\cite[Lemma 7.2]{pleasants1975cubic} provided $C$ is irreducible.

Let $\gamma \in K/\mathcal{O}$ and define
\[
\mathfrak{M}_\gamma \coloneqq \left\{ \alpha \in I \colon |\alpha-\gamma| < P^{-3+\nu} \right\},
\]
where we regard $I = K_\R/\mathcal{O}$. We define the \emph{major arcs} as
\[
\mathfrak{M} = \bigcup_{\substack{\gamma \in K/\mathcal{O} \\ N(\mathfrak{a}_\gamma) \leq P^\nu}} \mathfrak{M}_\gamma,
\]
and the \emph{minor arcs} as
\[
\mathfrak{m} = I \setminus \mathfrak{M}.
\]
Further, define the sum $S_\gamma$ via
\[
S_\gamma = \sum_{\mathbf{x} \; \mathrm{mod} \; N(\mathfrak{a}_\gamma)} e(\tr({\gamma C(\mathbf{x})}).
\]
Given a parameter $R \geq 1$ we define the \emph{truncated singular series} to be
\[
\mathfrak{S}(R) \coloneqq \sum_{\substack{\gamma \in K/\O \\ N(\mathfrak{a}_\gamma) \leq R}} N(\mathfrak{a}_\gamma)^{-ns}S_\gamma,
\]
and the \emph{truncated singular integral} to be 
\[
\mathfrak{I}(R) \coloneqq \int_{|\zeta| < R^\nu} \int_{\mathcal{B}} e(\mathrm{tr}(\zeta R^{-3} C(R \bm{\xi})) ) d\bm{\xi} d\zeta.
\]
Pleasants~\cite[Lemma 7.1]{pleasants1975cubic} shows that if $\nu < \frac{1}{n+4}$ is satisfied then we have
\[
\int_{\mathfrak{M}} S(\alpha) d\alpha = \mathfrak{S}(P^\nu) \mathfrak{I}(P) P^{n(s-3)} + o(P^{n(s-3)}).
\]
Moreover, if $\mathcal{B} = \mathcal{B}(\mathbf{z})$ is the box as in the beginning of the section, provided that the sidelengths of the boxes are sufficiently small, and if $C(\mathbf{x})$ is irreducible over $K$ then Pleasants~\cite[Lemma 7.2]{pleasants1975cubic} further shows that $\mathfrak{I}(R)$ converges absolutely to a positive number $\mathfrak{I}$.

We remark that Lemma 7.2 in~\cite{pleasants1975cubic} was originally stated under the weaker assumption that $C(\mathbf{x})$ is not a rational multiple of a cube of a linear form. His proof relies on a result by Davenport~\cite[Lemma 6.2]{davenport_32_variables}, which assumes the existence of a non-singular, real solution $\bm{\xi} \in \R^n$ of a rational cubic form $G$ such that 
\[
\frac{\partial G}{\partial x_i}(\bm{\xi}) \neq 0, \quad \xi_i \neq 0,
\]
holds for some $i$. In particular Pleasants writes that \textit{"this hypothesis is not used in the proof of the lemma, however, and in any case the argument that follows could easily be adapted to provide it"}. While one can always find  $\bm{\xi} \in \R^s$ with $\frac{\partial G}{\partial x_i}(\bm{\xi}) \neq 0$ unless $G$ is a rational multiple of a cube of a linear form, one can not necessarily ensure that $\xi_i \neq 0$ for the same index $i$. Consider for example $G(x_1,\hdots, x_n) = x_1(x_2^2+\cdots +x_n^2)$. It is possible that Davenport's result~\cite[Lemma 6.2]{davenport_32_variables} holds nevertheless in this generality but at least the standard method of establishing bounded variation of the auxiliary function involved in the proof by showing the existence of right and left derivatives, see for example~\cite[Lemma 16.1]{davenport_book}, fails in general.

The singular series $\mathfrak{S}(R)$ may or may not converge absolutely as $R \rightarrow \infty$. If it does converge, then provided non-singular $\mathfrak{p}$-adic solutions of $C(\mathbf{x}) =0$ exist for all primes $\mathfrak{p}$, by standard arguments it follows that $\mathfrak{S} >0$. See for example the proof of Lemma 7.4 in~\cite{pleasants1975cubic}, where this argumentation is carried out in our setting. Finally, Lewis~\cite{lewis_p_adic_zeroes} showed that these non-singular $\mathfrak{p}$-adic solutions always exist whenever $s \geq 10$. Therefore we obtain the following.
\begin{theorem} \label{thm.major_arcs_if_sing_converges}
Let $C \in \O[x_1, \hdots, x_s]$ be an irreducible cubic form. Assume that $s \geq 10$. If the singular series $\mathfrak{S}(R)$ converges absolutely as $R \rightarrow \infty$ then 
\[
\int_{\mathfrak{M}} S(\alpha) d\alpha = \sigma P^{n(s-3)} + o(P^{n(s-3)}),
\]
for some $\sigma > 0$ as $P \rightarrow \infty$.
\end{theorem}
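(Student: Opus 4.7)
The plan is essentially an assembly of three facts already prepared in the preceding discussion. First I would invoke Pleasants' formula (his Lemma 7.1), which in our setting gives
\[
\int_{\mathfrak{M}} S(\alpha)\,d\alpha = \mathfrak{S}(P^\nu)\,\mathfrak{I}(P)\,P^{n(s-3)} + o\bigl(P^{n(s-3)}\bigr),
\]
valid provided $\nu < \tfrac{1}{n+4}$. The task then reduces to showing that $\mathfrak{S}(P^\nu)\,\mathfrak{I}(P)$ tends to a strictly positive constant $\sigma$ as $P \to \infty$.

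For the singular integral I would use Pleasants' Lemma 7.2: since our box $\mathcal{B} = \mathcal{B}(\mathbf{z})$ has been centred at a real zero $\mathbf{z}$ of $C$ with $\frac{\partial C}{\partial x_1}(\mathbf{z}) \neq 0$ and $z_1,\dots,z_n \neq 0$ (whose existence is guaranteed by irreducibility of $C$), and provided the sidelengths are chosen sufficiently small, one obtains $\mathfrak{I}(P) \to \mathfrak{I} > 0$. This step is purely archimedean and needs no further input beyond what is already cited.

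For the singular series the assumption of absolute convergence immediately gives $\mathfrak{S}(P^\nu) \to \mathfrak{S}$. The point to verify is $\mathfrak{S} > 0$. The standard strategy, spelled out in Pleasants' Lemma 7.4 in the number field setting, is to factor $\mathfrak{S}$ as an Euler product $\prod_{\mathfrak{p}} \sigma_{\mathfrak{p}}$ over prime ideals of $\O$, where each local factor $\sigma_{\mathfrak{p}}$ is the $\mathfrak{p}$-adic density of solutions of $C = 0$, and then argue via Hensel lifting that $\sigma_{\mathfrak{p}} > 0$ whenever $C$ admits a nonsingular $\mathfrak{p}$-adic zero. Since $s \geq 10$, Lewis' theorem on $\mathfrak{p}$-adic solvability of cubic forms over local fields guarantees the existence of such a nonsingular zero at every $\mathfrak{p}$, giving $\sigma_\mathfrak{p} > 0$ for all $\mathfrak{p}$, and the absolute convergence ensures the infinite product is itself nonzero.

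Setting $\sigma = \mathfrak{S}\cdot\mathfrak{I} > 0$ then yields the desired asymptotic. The only delicate point is really the positivity of $\mathfrak{S}$: the convergence of the product to its value is automatic from the hypothesis, but the nonvanishing of each local factor requires Lewis' result, which is exactly where the bound $s \geq 10$ enters; everything else is genuinely a matter of citing existing work of Pleasants.
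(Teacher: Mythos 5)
Your proposal is correct and follows essentially the same route as the paper: Pleasants' Lemma 7.1 for the major arc asymptotic, his Lemma 7.2 (with the box centred at a nonsingular real zero $\mathbf{z}$ with $z_i\neq 0$, which exists by irreducibility) for $\mathfrak{I}>0$, and the Euler product together with Lewis' theorem on nonsingular $\mathfrak{p}$-adic zeros for $s\ge 10$ to get $\mathfrak{S}>0$. Nothing essential is missing.
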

In particular, in Section~\ref{sec.vdc_pointwise_sing_series} we will establish the following. 
\begin{theorem} \label{thm.sing_series_in_13}
Assume that $s \geq 13$ and that Davenport's Geometric Condition~\eqref{eq.rank_condition} is satisfied then the singular series converges absolutely. Therefore if $C(\bm{x})$ is irreducible we have 
\[
\int_{\mathfrak{M}} S(\alpha) d\alpha = \sigma P^{n(s-3)} + o\left(P^{n(s-3)}\right),
\]
for some $\sigma > 0$ as $P \rightarrow \infty$.
\end{theorem}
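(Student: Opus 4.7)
The plan is to establish a non-trivial pointwise bound on the complete exponential sum $S_\gamma$ that saves a power of $N(\mathfrak{a}_\gamma)$ in terms of $s$, and then sum this bound against the counting function for denominator ideals from Lemma~\ref{lem.denominator_ideal_fixed} to get absolute convergence once $s \geq 13$.

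First I would carry out a Weyl/van der Corput differencing argument in the number field setting. Squaring $|S_\gamma|^2$ and expanding $C(\mathbf{x}+\mathbf{h})-C(\mathbf{x})$ in $\mathbf{h}$, we reduce to a sum whose phase is quadratic in the remaining variable; a second application of Cauchy--Schwarz then produces a phase that is linear in terms of the bilinear forms $B_i(\mathbf{x},\mathbf{y})$ evaluated modulo $\mathfrak{a}_\gamma$. After completing squares and exchanging summations, one obtains an inequality of the rough shape
\[
|S_\gamma|^{4} \ll N(\mathfrak{a}_\gamma)^{3ns+\varepsilon}\cdot \#\bigl\{(\mathbf{x},\mathbf{y})\bmod\mathfrak{a}_\gamma:\mathfrak{a}_\gamma \mid B_i(\mathbf{x},\mathbf{y})\text{ for all }i\bigr\}.
\]
This is the number-field analogue of the bilinear bound used by Heath-Brown and Pleasants; the technical points are (i) replacing integer moduli by the ideal $\mathfrak{a}_\gamma$ throughout, for which $\mathrm{Norm}(\cdot)\asymp |\cdot|^n$ in the imaginary quadratic case is crucial, and (ii) applying Lemma~\ref{lem.elements_in_ideals} to generate small representatives when unfolding congruences.

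Second, I would feed Davenport's Geometric Condition~\eqref{eq.rank_condition_for_bilinear} into this bilinear count. Splitting dyadically according to the rank $r(\mathbf{x})$ of the Hessian matrix and counting the corresponding $\mathbf{y}$'s of bounded height modulo $\mathfrak{a}_\gamma$, the key rank inequality gives a total count bounded by $N(\mathfrak{a}_\gamma)^{ns+\varepsilon}$ times a factor saving over the trivial count $N(\mathfrak{a}_\gamma)^{2ns}$. The resulting pointwise bound is of the form
\[
|S_\gamma| \ll N(\mathfrak{a}_\gamma)^{ns - c s + \varepsilon}
\]
for some positive absolute constant $c$ (one expects $c=1/4$ from two differencings, as in Heath-Brown's treatment).

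Finally, I would sum this bound against Lemma~\ref{lem.denominator_ideal_fixed}. Grouping the $\gamma$'s dyadically by $N(\mathfrak{a}_\gamma)\sim M$, the number of such $\gamma$ modulo $\mathcal{O}$ is $O(M^{1+\varepsilon})$ (there are $O(M^\varepsilon)$ ideals of norm $\asymp M$, each contributing at most $M$ residue classes), so
\[
\sum_{N(\mathfrak{a}_\gamma)\sim M} N(\mathfrak{a}_\gamma)^{-ns}\,|S_\gamma| \ll M^{1 - cs + \varepsilon}.
\]
This is summable in $M$ as soon as $cs > 1$, which the bound from the argument above will yield for every $s \geq 13$. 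Absolute convergence of $\mathfrak{S}(R)$ then follows, and the asymptotic for the major-arc contribution is immediate from Theorem~\ref{thm.major_arcs_if_sing_converges}, since $s \geq 13 \geq 10$ and irreducibility is assumed.

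The main obstacle will be the bookkeeping of the first step: carrying out two clean Weyl differencings over $\mathcal{O}/\mathfrak{a}_\gamma$ and arranging the resulting congruences as a bilinear system that is genuinely controlled by~\eqref{eq.rank_condition_for_bilinear}. In the rational case this relies on divisibility of integer moduli, whereas here one must work with ideal divisibility and a lattice reduction inside $\mathfrak{a}_\gamma$; this is precisely where the imaginary quadratic assumption pays off, because it guarantees $|\alpha|\asymp \mathrm{Norm}(\alpha)^{1/n}$ so that height bounds translate faithfully to norm bounds on the ideals that appear.
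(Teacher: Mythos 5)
Your overall architecture (difference twice to reach a bilinear condition, feed in the geometric condition, then sum over $\gamma$ using Lemma~\ref{lem.denominator_ideal_fixed} and conclude via Theorem~\ref{thm.major_arcs_if_sing_converges}) is the right one, but the step you yourself flag as ``the main obstacle'' is exactly where the proof lives, and your proposal does not close it. The condition \eqref{eq.rank_condition_for_bilinear} controls the number of pairs with $B_i(\mathbf{x},\mathbf{y})=0$ \emph{exactly}, i.e.\ it sees the rank of $M(\mathbf{x})$ over $K$; it says nothing directly about $\#\{(\mathbf{x},\mathbf{y})\bmod \mathfrak{a}_\gamma: \mathfrak{a}_\gamma\mid B_i(\mathbf{x},\mathbf{y})\ \forall i\}$, because a prime $\mathfrak{p}\mid\mathfrak{a}_\gamma$ may divide all the relevant minors of $M(\mathbf{x})$, so the rank modulo $\mathfrak{p}$ can collapse and the divisibility count can be far larger than the characteristic-zero rank predicts. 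The paper's mechanism for bridging this is Davenport's Shrinking Lemma (Lemma~\ref{lem.shrinking}) combined with part (1) of Lemma~\ref{lem.small_torus_norm_implies_divisibility}: one shrinks the counting box by a factor $Z\asymp H^{-1}N(\mathfrak{a}_\gamma)^{1/n-1}$ until the Diophantine condition $\|\Delta^{-1}\tr(\gamma\omega_jB_i(\mathbf{w},\mathbf{h}))\|$ being small, together with $|6\Delta B_i(\mathbf{w},\mathbf{h})|\ll A N(\mathfrak{a}_\gamma)^{1/n}$, forces $B_i(\mathbf{w},\mathbf{h})=0$ over $\O$ (via Lemma~\ref{lem.elements_in_ideals}), and only then is the geometric condition applicable. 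Relatedly, the paper's differencing is a genuine van der Corput differencing with a \emph{short} shift range $H\asymp N(\mathfrak{a}_\gamma)^{1/(3n)}$ that is optimized at the end; a pure fourth-power Weyl differencing with both variables running over full residue systems does not yield the needed saving under the geometric condition alone, because low-rank shifts $\mathbf{h}$ contribute too much. Your hoped-for saving $c=1/4$ is what nonsingularity would give; under Davenport's condition the paper obtains only $c=1/6$, i.e.\ $S_\gamma\ll N(\mathfrak{a}_\gamma)^{s(n-1/6)+\varepsilon}$, and the paper notes that even Heath-Brown's refinements only reach $s\ge 11$, so $c=1/4$ (which would give roughly $s\ge 9$) cannot be right here.

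Two smaller corrections. First, your dyadic count is off by a factor of $M$: for each of the $\asymp M$ integers $k\sim M$ there are $O(k^{\varepsilon})$ ideals of norm $k$ and $O(k^{1+\varepsilon})$ classes $\gamma$ with $\mathfrak{a}_\gamma$ of that norm, so the number of $\gamma$ with $N(\mathfrak{a}_\gamma)\sim M$ is $O(M^{2+\varepsilon})$, and the convergence criterion is $cs>2$, not $cs>1$; with $c=1/6$ this is precisely $s>12$, matching the theorem. Second, the imaginary quadratic hypothesis is \emph{not} what makes this section work: here $\theta=0$, so only condition (1) of Lemma~\ref{lem.small_torus_norm_implies_divisibility} is used, and the paper explicitly remarks that Theorem~\ref{thm.sing_series_in_13} holds over an arbitrary number field. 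The imaginary quadratic assumption is needed elsewhere (the fractional Dirichlet approximation and the $\theta\neq0$ branch of Lemma~\ref{lem.small_torus_norm_implies_divisibility} used on the minor arcs).
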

We remark that we show this result for any number field $K$.

\section{Auxiliary Diophantine Inequalities} \label{sec.aux_dioph_ineq}
To bound the Weyl sum $S(\alpha)$ of a general cubic form, classical Weyl differencing leaves us with the task of examining the number of solutions to certain auxiliary Diophantine inequalities. Davenport's crucial idea was to bootstrap these inequalities using his \textit{Shrinking Lemma}, combined with the observation that sufficiently strong Diophantine inequalities already imply divisibility or even equality.

In this section, we prepare these arguments by providing a version of this observation adapted to our setting. We are only able to show a satisfactory version of this lemma if $K/\Q$ is an imaginary quadratic number field, this being the second of the obstructions mentioned in the introduction.

\begin{lemma} \label{lem.small_torus_norm_implies_divisibility}
Assume that $K/\Q$ is a number field and denote by $\Delta$ the discriminant of this extension. There exists a real positive constant $A>0$ depending only on $K$ and the choice of integral basis $\Omega$ for $K$ such that the following statement holds. 

Let $M \geq0$ be a real number and let $\alpha \in K_\R$. Suppose that $\alpha = \gamma + \theta$ with $\gamma \in K$ and $M|\theta| N(\mathfrak{a}_\gamma)^{1/n} \leq A$. If $m \in \O$ is such that $|m| \leq M$ and $\norm{ \Delta^{-1}\mathrm{tr}({\alpha m \omega_j}) } < P_0^{-1}$ holds for all $j = 1, \hdots, n$ where $A P_0 \geq N(\mathfrak{a}_\gamma)^{1/n}$ then $m \in \mathfrak{a}_\gamma$. In particular if either of the conditions
\begin{enumerate}
    \item $M \leq A N(\mathfrak{a}_\gamma)^{1/n}$, or
    \item  $K$ is an imaginary quadratic number field and $A |\theta| \geq  N(\mathfrak{a}_\gamma)^{-1/n} P_0^{-1}$ 
\end{enumerate}
is satisfied, then we must have $m = 0$. 
\end{lemma}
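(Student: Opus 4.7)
The plan is to combine Lemma~\ref{lem.trace_integral_discriminant} with the lattice structure of $\O$ inside $K_\R$. Consider the $\R$-linear isomorphism $\Phi\colon K_\R \to \R^n$ defined by $\Phi(\beta) = (\mathrm{tr}(\beta\omega_j))_{j=1}^n$; in the integral basis $\Omega$, its matrix is the trace form $T = (\mathrm{tr}(\omega_i\omega_j))_{ij}$ of determinant $\Delta$, and $\Phi(\O) = T\Z^n \supseteq \Delta\Z^n$. The hypothesis $\|\Delta^{-1}\mathrm{tr}(\alpha m \omega_j)\| < P_0^{-1}$ for all $j$ places $\Phi(\alpha m)$ within $|\Delta|P_0^{-1}$ (in the sup norm) of $\Delta\Z^n$, and hence of $\Phi(\O)$. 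Since $T^{-1} = \Delta^{-1}T'$ for $T'$ the integer-entried adjugate of $T$, the map $\Phi^{-1}$ has operator norm $\ll |\Delta|^{-1}$, and inverting yields some $\alpha' \in \O$ with
\[
|\alpha m - \alpha'| \ll P_0^{-1}.
\]

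Next, writing $\alpha m - \alpha' = (\gamma m - \alpha') + \theta m$ and invoking $M|\theta|N(\mathfrak{a}_\gamma)^{1/n} \leq A$ together with $AP_0 \geq N(\mathfrak{a}_\gamma)^{1/n}$, we find $|\gamma m - \alpha'| \ll A\, N(\mathfrak{a}_\gamma)^{-1/n}$. The difference lies in the fractional ideal $\mathfrak{a}_\gamma^{-1}$, and any nonzero $g$ there satisfies $|N(g)| \geq 1/N(\mathfrak{a}_\gamma)$ (since $\mathfrak{a}_\gamma(g) \subseteq \O$) combined with $|g|^n \gg |N(g)|$, giving $|g| \gg N(\mathfrak{a}_\gamma)^{-1/n}$. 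Choosing $A$ sufficiently small depending on $K$ and $\Omega$ makes these bounds incompatible unless $\gamma m = \alpha' \in \O$, i.e.\ $m \in \mathfrak{a}_\gamma$.

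Conclusion~$(1)$ is then immediate: Lemma~\ref{lem.elements_in_ideals} gives $|m| \geq c\, N(\mathfrak{a}_\gamma)^{1/n}$ for nonzero $m \in \mathfrak{a}_\gamma$, contradicting $|m| \leq M \leq A N(\mathfrak{a}_\gamma)^{1/n}$ when $A < c$. For Conclusion~$(2)$ the imaginary-quadratic hypothesis enters crucially, since there $\mathrm{Norm}(\beta) \asymp |\beta|^2$ and hence $|\beta\beta'| \asymp |\beta||\beta'|$. With the main conclusion $\gamma m = \alpha'$ we have $\theta m = \alpha m - \alpha'$, so the earlier bound upgrades to $|\theta||m| \asymp |\theta m| \ll P_0^{-1}$. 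Assuming $m \neq 0$, Lemma~\ref{lem.elements_in_ideals} provides $|m| \gg N(\mathfrak{a}_\gamma)^{1/2}$, while the new hypothesis gives $|\theta| \geq N(\mathfrak{a}_\gamma)^{-1/2}/(AP_0)$, and multiplying yields $|\theta||m| \gg 1/(AP_0)$. Comparison with the upper bound forces $A \gg 1$, contradicting our choice of $A$ small; hence $m = 0$.

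The main obstacle is the first step, where the componentwise trace condition must be converted into a genuine closeness statement in $K_\R$ via the bounded operator norm of $\Phi^{-1}$, and the absolute constant $A$ must be chosen small enough so that the fractional ideal gap argument succeeds and both additional conclusions follow. The multiplicativity $\mathrm{Norm}(\beta) \asymp |\beta|^2$ is essential only for Conclusion~$(2)$, and its failure for general number fields is precisely one of the two obstructions to extending Heath-Brown's argument beyond the imaginary quadratic case highlighted in the introduction.
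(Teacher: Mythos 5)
Your proof is correct and follows essentially the same route as the paper: convert the trace condition via the integrality of $\Delta\mathbf{T}^{-1}$ into a statement that $\gamma m$ is very close to an element of $\O$, rule out a nonzero discrepancy by a norm lower bound (your use of $\mathfrak{a}_\gamma^{-1}$ is equivalent to the paper's multiplication by a short $g\in\mathfrak{a}_\gamma$), and then derive (1) and (2) from Lemma~\ref{lem.elements_in_ideals} and the multiplicativity $|\theta m|\asymp|\theta||m|$ in the imaginary quadratic case. Your handling of (2), reading off $|\theta m|=|\alpha m-\alpha'|\ll P_0^{-1}$ directly from the exact identity $\gamma m=\alpha'$, is in fact a slightly cleaner packaging of the paper's computation.
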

\begin{proof}
Note first that
\[
\norm{\Delta^{-1}\mathrm{tr}({\gamma m \omega_j})} \leq \norm{\Delta^{-1}\mathrm{tr}({\alpha m \omega_j})}  + \norm{\Delta^{-1}\mathrm{tr}({\theta m \omega_j})}.
\]
Now due to our assumption we have $\norm{\Delta^{-1}\mathrm{tr}({\alpha m \omega_j})} < P_0^{-1}$. Further it is easy to see that
\[
 \Delta^{-1}|\mathrm{tr}(\theta m \omega_j)| \ll |\theta| M.
\]
Therefore choosing $A$ sufficiently small we find
\begin{equation} \label{eq.trace_small}
    \norm{\Delta^{-1}\mathrm{tr}({\gamma m \omega_j})} < \frac{A^{1/2}}{ N(\mathfrak{a}_\gamma)^{1/n} },
\end{equation}
for all $j = 1, \hdots, n$. As before write  $\mathbf{T} = (\mathrm{tr}(\omega_i \omega_j))_{i,j}$ for the trace form.
Write $\mathbf{x} \in \R^n$ for the real vector obtained from $\gamma m$ under the isomorphism $K_\R \cong \R^n$. Then~\eqref{eq.trace_small} is equivalent to saying that there exist $\mathbf{a} \in \Z^n$ and $\mathbf{r} \in \R^n$ with $|\mathbf{r}| < \frac{A^{1/2}}{ N(\mathfrak{a}_\gamma)^{1/n} }$ such that
\[
 \mathbf{T}(\Delta^{-1}\mathbf{x}) = \mathbf{a} + \mathbf{r}.
\]
Recall that $\Delta \mathbf{T}^{-1}$ is an integral matrix whose entries are bounded in terms of $K$. Therefore 
\[
 \mathbf{x} = \Delta \mathbf{T}^{-1}(\mathbf{a})  + \Delta \mathbf{T}^{-1}(\mathbf{r}).
\]
Now $ \mathbf{T}^{-1}(\mathbf{a}) \in \Z^n$ and 
\[
|\Delta \mathbf{T}^{-1}(\mathbf{r})| < \frac{A^{1/3}}{N(\mathfrak{a}_\gamma)^{1/n}}, 
\]
after decreasing $A$ if necessary.
We thus find that 
\[
 \gamma m = a + \rho,
\]
where $a \in \O$ and $|\rho| < \frac{A^{1/3}}{N(\mathfrak{a}_\gamma)^{1/n}}$.
By Lemma~\ref{lem.elements_in_ideals} there exists $g \in \mathfrak{a}_\gamma$ with $|g| \asymp N(\mathfrak{a}_\gamma)^{1/n}$. From the above equation we see that $g \rho \in \O$, and so, unless $\rho = 0$ we have
\[
1 \leq |g \rho|  < A^{1/4},
\]
after decreasing $A$ if necessary.
Choosing $A$ suitably small therefore leads to a contradiction whence we must have $\rho = 0$, and so $m \in \mathfrak{a}_\gamma$. This finishes the first part of the proof.

If we now assume that $M \leq A N(\mathfrak{a}_\gamma)^{1/n}$ is satisfied then by choosing $A$ suitably small this implies that $m = 0$ via Lemma~\ref{lem.elements_in_ideals}. 

Finally, assume that $A |\theta| >  (N(\mathfrak{a}_\gamma)^{1/n} P_0)^{-1}$ is satisfied and that $K$ is an imaginary quadratic number field. Upon  choosing $A$ even smaller if necessary, we find that
\[
\Delta^{-1} |\mathrm{tr}(\theta m \omega_j)| \leq \frac{1}{2},
\]
for all $j = 1, \hdots, n$ and thus
\[
\Delta^{-1} |\mathrm{tr}(\theta m \omega_j)| = \norm{\Delta^{-1} \mathrm{tr}(\theta m \omega_j)} \leq\norm{\Delta^{-1} \mathrm{tr}(\gamma m \omega_j)} + \norm{\Delta^{-1} \mathrm{tr}(\alpha m \omega_j)} < P_0^{-1},
\]
for all $j = 1, \hdots, n$. Write  $\mathbf{y} = (y_1, \hdots, y_n)$ for the image of $\theta m $ under the isomorphism $K_\R \cong \R^n$ and let $\mathbf{T}$ be the trace form as above.
The above inequality is equivalent to saying that there exists some $\mathbf{t} \in \R^n$ with $|\mathbf{t}|<  P_0^{-1}$ such that
\[
\mathbf{T}(\Delta^{-1}\mathbf{y}) = \mathbf{t}.
\]
As before the inverse of $\mathbf{T}$ is a matrix with rational entries, whose absolute value is bounded by $O(1)$. Hence
\[
|\mathbf{y}| = \Delta |\mathbf{T}^{-1} (\mathbf{t}) | \ll  |\mathbf{t}| <  P_0^{-1}.
\]
Further $|\mathbf{y}| = |\theta m|$, and since $K$ is an imaginary quadratic number field we have $|\theta^{-1}| \asymp |\theta|^{-1}$ and so
\[
|m| \ll  (P_0 |\theta|)^{-1}.
\]
Hence for sufficiently small $A$ we obtain
\[
|m| <A^{1/2} N(\mathfrak{a}_\gamma)^{1/n}.
\]
Choosing $A$ to be suitably small implies $m = 0$ by Lemma~\ref{lem.elements_in_ideals}.
\end{proof}

We now recall Davenport's shrinking lemma~\cite[Lemma 12.6]{davenport_book}.
\begin{lemma} \label{lem.shrinking}
Let $L \colon \R^m \rightarrow \R^m$ be a linear map. Let $a >0$ be a real number and for a real number $Z >0$ consider
\[
N(Z) = \left\{ \mathbf{u} \in \Z^m \colon |\mathbf{u}| < aZ, \; \norm{(L(\mathbf{u}))_i} < a^{-1} Z, \text{for all $i$}  \right\}.
\]
Then if $0 < Z \leq 1$ we have
\[
N(1) \ll_m Z^{-m} N(Z).
\]
\end{lemma}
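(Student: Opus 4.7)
I would lift the problem to a $2m$-dimensional lattice and apply Minkowski's second theorem on successive minima, exploiting the block structure of the lift.

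First, introduce the unimodular lattice
\[
\Lambda = \{(\mathbf{u}, L\mathbf{u} - \mathbf{k}) : \mathbf{u}, \mathbf{k} \in \Z^m\} \subset \R^{2m}
\]
and the symmetric convex box
\[
B(t) = [-at, at]^m \times [-a^{-1}t, a^{-1}t]^m,
\]
of volume $4^m t^{2m}$. Writing $\widetilde{N}(t) := \#(\Lambda \cap B(t))$, each $\mathbf{u}$ contributing to $N(t)$ corresponds to at most $(1 + 2a^{-1}t)^m$ integer vectors $\mathbf{k}$ with $|L\mathbf{u} - \mathbf{k}| < a^{-1}t$, so $N(t) \le \widetilde{N}(t) \le (1+2a^{-1}t)^m N(t)$. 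In particular, for $t \le 1$ the counts $N$ and $\widetilde{N}$ agree up to factors depending only on $a$ and $m$.

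Second, apply Minkowski's theorem on successive minima to $\Lambda$ with respect to $B(1)$, producing $\lambda_1 \le \cdots \le \lambda_{2m}$ with $\lambda_1 \cdots \lambda_{2m} \asymp_m 1$, and use the standard lattice-point estimate
\[
\widetilde{N}(t) \asymp_m \prod_{i=1}^{2m} \max(1, t/\lambda_i).
\]
The ratio $\widetilde{N}(1)/\widetilde{N}(Z)$ then factors over $i$, and each factor is at most $\max(1, 1/Z) = 1/Z$.

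The crucial structural step, and the main obstacle, is to show that at most $m$ of the successive minima $\lambda_i$ can be smaller than $1$, so that only $m$ of the factors in the product contribute to the ratio. This relies on the block form of $\Lambda$: the rank-$m$ sublattice $\{\mathbf{0}\} \times \Z^m \subset \Lambda$ provides $m$ linearly independent vectors of $B(1)$-norm equal to $a$, yielding $\lambda_m \le a$. Combining this with a dual analysis of the lattice $\Lambda^*$ against the polar body $B(1)^\circ$ and the Mahler transfer principle, the remaining minima $\lambda_{m+1}, \ldots, \lambda_{2m}$ are constrained so that at most $m$ of the $\lambda_i$ lie below $1$. Granting this, $\widetilde{N}(1)/\widetilde{N}(Z) \ll_m Z^{-m}$, and the relation between $N$ and $\widetilde{N}$ yields the claimed bound $N(1) \ll_m Z^{-m} N(Z)$.

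The hard part will be the structural refinement in the last step: it is precisely this argument that upgrades the naïve bound $Z^{-2m}$, which would come from Davenport's shrinking lemma applied blindly in $2m$ dimensions, to the sharper $Z^{-m}$ that matches the dimension of the ambient lattice $\Z^m$ of the original problem.
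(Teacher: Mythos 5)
Your framework---lifting to the unimodular lattice $\Lambda\subset\R^{2m}$, comparing $N$ with the lattice-point count $\widetilde N$, and reducing everything to the claim that at most $m$ of the $2m$ successive minima of $(\Lambda,B(1))$ lie below a constant---is exactly the skeleton of Davenport's own argument (the paper does not prove the lemma; it cites \cite[Lemma 12.6]{davenport_book}). But the step you yourself flag as the hard part is a genuine gap, and as stated it cannot be closed, because the lemma is \emph{false} for a general linear map $L$: Davenport's Lemma 12.6 carries the hypothesis that the coefficient matrix of $L$ is \emph{symmetric}, which the paper's restatement silently drops (harmlessly for its applications, since every map it feeds into the lemma comes from the symmetric Hessian $M(\mathbf{x})$ paired with the symmetric trace form). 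Your ``dual analysis plus Mahler transference'' is precisely where symmetry enters: after swapping the two coordinate blocks, $\Lambda^*$ is the lattice built from $L^{T}$, not from $L$, so transference only gives $\lambda_i(\Lambda_L)\,\lambda_{2m+1-i}(\Lambda_{L^{T}})\asymp_m 1$; the conclusion you need, $\lambda_m\lambda_{m+1}\asymp_m 1$ and hence $\lambda_{m+1}\gg_m 1$ for $\Lambda_L$ itself, follows only when $L=L^{T}$. (Your observation that $\{\mathbf 0\}\times\Z^m$ forces $\lambda_m\le a$ is an upper bound and points in the wrong direction; what is needed is a lower bound on $\lambda_{m+1}$.)

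To see that symmetry cannot be dispensed with, take $m=2$ and $L(\mathbf u)=(\xi u_1+\xi' u_2,\,0)$ with $1,\xi,\xi'$ a basis of an order in a totally real cubic field, so that $\Vert\xi u_1+\xi'u_2\Vert\gg\max(|u_1|,|u_2|)^{-2}$ for all nonzero $\mathbf u\in\Z^2$ (a norm-form estimate). The condition on the second coordinate of $L(\mathbf{u})$ is vacuous, and the lattice $\{(u_1/a,\,u_2/a,\,a(\xi u_1+\xi'u_2-k))\}\subset\R^3$ has determinant $a^{-1}$ and, by the badly approximable property, all three successive minima $\asymp a^{-1/3}$; equivalently, three of the four successive minima of your $\Lambda_L$ are $\asymp a^{-1/3}$, so the structural claim fails. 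Concretely, $N(1)\asymp a$, while $N(Z)=1$ for $Z=\delta a^{-1/3}$ with $\delta$ a small constant, so $N(1)/N(Z)\asymp a$ whereas $Z^{-2}\asymp a^{2/3}$, violating the asserted bound for large $a$. The repair is to restore the hypothesis $L=L^{T}$ and run the transference argument as you describe. A second, minor point: your comparison $N\le\widetilde N\le(1+2a^{-1}t)^m N$ introduces constants depending on $a$, whereas the lemma's implied constant must depend on $m$ alone; dispose of $a<1$ trivially first (as the paper remarks, the statement is immediate there) so that $(1+2a^{-1}t)^m\le 3^m$ in the remaining range.
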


As noted in~\cite{heath2007cubic} the lemma was originally only stated when $a \geq 1$ but we may extend the range of $a$ to all positive real numbers since the result holds trivially if $0 < a < 1$.

\section{Weyl Differencing} 

\label{sec.weyl_differencing_cubes}
One of the main innovations in~\cite{heath2007cubic} is to introduce an averaged van der Corput differencing approach in order to bound the contribution from the minor arcs. Since this cannot handle the entire range of minor arcs we need to supplement it with an estimate coming from conventional Weyl differencing.

Let $\alpha \in K_\R$. Throughout this section we will write
\[
\alpha = \gamma + \theta,
\]
where $\gamma \in K$ and $\theta \in K_\R$.
Note as in~\cite[Lemma 2.1]{pleasants1975cubic} we find
\begin{equation} \label{eq.sum_after_weyl_diff}
    |S(\alpha)|^4 \ll P^{ns} \sum_{|\mathbf{x}|, |\mathbf{y}| < P} \prod_{i=1}^s \prod_{j=1}^n \min\left( P, \norm{\mathrm{tr}(6 \alpha \omega_j B_i(\mathbf{x},\mathbf{y}))}^{-1} \right).
\end{equation}
This estimate is proved using a classical Weyl differencing procedure adjusted to this context. Following standard arguments as in Davenport~\cite[Chapter 13]{davenport_book} we now transform this into a counting problem.

Given $\alpha \in \R$ and $P \geq 1$ define
\[
N(\alpha, P) \coloneqq \# \left\{(\mathbf{x}, \mathbf{y}) \in \O^{2s} \colon |\mathbf{x}| < P, \, |\mathbf{y}| < P, \, \norm{\mathrm{tr}(6 \alpha \omega_j B_i(\mathbf{x},\mathbf{y}))} < P^{-1}, \, \forall i,j \right\}.
\]
For a fixed $\mathbf{x} \in \O^s$ write further
\[
N(\mathbf{x}) \coloneqq \# \left\{ \mathbf{y} \in \O^{s} \colon  |\mathbf{y}| < P, \, \norm{\mathrm{tr}(6 \alpha \omega_j B_i(\mathbf{x},\mathbf{y}))} < P^{-1}, \, \forall i,j \right\},
\]
so that
\[
N(\alpha,P) = \sum_{|\mathbf{x}| < P} N(\mathbf{x}).
\]
Let $r_{ij}$ be integers such that $0 \leq r_{ij} <P$ for $i = 1, \hdots, s$, $j = 1, \hdots, n$. We claim that there exist no more than $N(\mathbf{x})$ integer tuples $\mathbf{y} \in \O^s$, which lie in a box whose edges have sidelengths at most $P$ such that
\[
\frac{r_{ij}}{P} \leq \left\{\mathrm{tr}(6 \alpha \omega_j B_i(\mathbf{x},\mathbf{y})) \right\} < \frac{r_{ij}+1}{P}
\]
is satisfied for all $i = 1, \hdots, s$ and $j = 1, \hdots, n$, where $\{x\}$ denotes the fractional part of a real number $x$. Indeed, if $\mathbf{y}_1$ and $\mathbf{y}_2$ are two such integer tuples satisfying the above system of inequalities then $|\mathbf{y}_1 - \mathbf{y}_2| < P$ and 
\[
\norm{\mathrm{tr}(6 \alpha \omega_j B_i(\mathbf{x},\mathbf{y}_1-\mathbf{y}_2))} < P^{-1}
\]
holds for all $i,j$. Hence, since $\mathbf{y} = \mathbf{0}$ is a possible solution, there are no more than $N(\mathbf{x})$ possible solutions to the system of inequalities above. Dividing the box $P \mathcal{B}$ into $2^{ns}$ boxes whose edges have sidelength  at most $P$ we find
\begin{align*}
    \sum_{|\mathbf{y}| < P} \prod_{i=1}^s \prod_{j=1}^i \min\left( P, \norm{\mathrm{tr}(6 \alpha \omega_j B_i(\mathbf{x},\mathbf{y}))}^{-1} \right) &\ll N(\mathbf{x}) \prod_{i,j} \sum_{r_{ij} = 0}^{P} \min\left( P, \frac{P}{r_{ij}}, \frac{P}{P-r_{ij}-1} \right) \\
    &\ll N(\mathbf{x}) (P \log P)^{ns}.
\end{align*}
Upon summing this estimate over $|\mathbf{x}| <P$ and using~\eqref{eq.sum_after_weyl_diff}  we obtain
\begin{equation} \label{eq.exp_sum_bound_solns_bilinears}
    |S(\alpha)|^4 \ll P^{2ns} (\log P)^{ns} N(\alpha,P).
\end{equation}
We now proceed to estimate $N(\alpha,P)$ using the results from the previous section.

For fixed $\mathbf{x} \in \O^s$ identifying $\O^s \cong \R^{ns}$ and given $\mathbf{y} \in \O^s$ one may view the map
\[
\mathbf{y} \mapsto (\mathrm{tr}(6 \alpha \omega_j B_i(\mathbf{x},\mathbf{y})))_{i,j}
\]
as a linear map $\R^{ns} \rightarrow \R^{ns}$. Hence we can apply Lemma~\ref{lem.shrinking} where $N(\mathbf{x}) = N(1)$ in the notation of the lemma where $Z$ is to be determined in due course. Summing over the $|\mathbf{x}| < P$ then yields
\begin{multline} \label{eq.first_shrinkage}
N(\alpha, P) \ll 
Z^{-ns} \# \left\{(\mathbf{x}, \mathbf{y}) \in \O^{2s} \colon |\mathbf{x}| < P, \, |\mathbf{y}| < ZP, \, \right. \\
\left. \norm{\mathrm{tr}(6 \alpha \omega_j B_i(\mathbf{x},\mathbf{y}))} < ZP^{-1}, \, \forall i,j \right\}.
\end{multline}
If we apply the same procedure to the quantity on the right hand side of~\eqref{eq.first_shrinkage}, but now with the roles of $\mathbf{x}$ and $\mathbf{y}$ reversed we obtain
\begin{multline} \label{eq.second_shrinkage}
N(\alpha, P) \ll 
Z^{-2ns} \# \left\{(\mathbf{x}, \mathbf{y}) \in \O^{2s} \colon |\mathbf{x}| < ZP, \, |\mathbf{y}| < ZP, \, \right. \\
\left. \norm{\mathrm{tr}(6 \alpha \omega_j B_i(\mathbf{x},\mathbf{y}))} < Z^2P^{-1}, \, \forall i,j \right\}.
\end{multline}
At this point we will employ Lemma~\ref{lem.small_torus_norm_implies_divisibility}. We wish to choose $Z$ such that the bilinear forms appearing in the right hand side of~\eqref{eq.second_shrinkage} are forced to vanish. To this end, in the notation of the lemma we take $m = 6 \Delta B_i(\mathbf{x},\mathbf{y})$, $M \asymp 6Z^2P^2$ and $P_0^{-1} = Z^2P^{-1}$. Choose the parameter $Z$ so that it satisfies
\[
0<Z<1, \quad Z^2 \ll  (P^2 |\theta| N(\mathfrak{a}_\gamma)^{1/n})^{-1}, \quad Z^2 \ll  \frac{P}{N(\mathfrak{a}_\gamma)^{1/n}},
\]
as well as
\[
Z^2 \ll  \max \left( \frac{N(\mathfrak{a}_\gamma)^{1/n}}{P^2}, N(\mathfrak{a}_\gamma)^{1/n} |\theta| P \right),
\]
where the implicit constants involved are sufficiently small such that the assumptions of Lemma~\ref{lem.small_torus_norm_implies_divisibility} are satisfied.
Provided $K$ is an imaginary quadratic number field, Lemma~\ref{lem.small_torus_norm_implies_divisibility} and~\eqref{eq.second_shrinkage} give
\begin{equation*}
    N(\alpha, P) \ll Z^{-2ns} \left\{(\mathbf{x}, \mathbf{y}) \in \O^{2s} \colon |\mathbf{x}| < ZP, \, |\mathbf{y}| < ZP, \, B_i(\mathbf{x},\mathbf{y}) = 0, \; i = 1, \hdots, s \right\},
\end{equation*}
where we note that clearly $6 \Delta B_i(\mathbf{x},\mathbf{y}) = 0$ if and only if $ B_i(\mathbf{x},\mathbf{y}) = 0$.

Since we assume that Davenport's Geometric Condition~\eqref{eq.rank_condition} is satisfied it follows from the simple observation~\eqref{eq.rank_condition_for_bilinear} that 
\begin{equation*}
    N(\alpha,P) \ll Z^{-2ns} (ZP)^{ns}.
\end{equation*}
From~\eqref{eq.exp_sum_bound_solns_bilinears} for permissible $Z$ as described above we therefore have
\begin{equation} \label{eq.exp_sum_in_terms_of_parameter_Z}
    |S(\alpha)|^4 \ll P^{3ns+\varepsilon} Z^{-ns}.
\end{equation}
The estimate is optimised when $Z$ is as large as possible. Hence if we take
\begin{equation*}
    Z^2 \asymp  \min\left\{1, (P^2 |\theta| N(\mathfrak{a}_\gamma)^{1/n})^{-1}, \frac{P}{N(\mathfrak{a}_\gamma)^{1/n}}, 
   \max \left( \frac{N(\mathfrak{a}_\gamma)^{1/n}}{P^2}, N(\mathfrak{a}_\gamma)^{1/n} |\theta| P \right) \right\}
\end{equation*}
then $Z$ is clearly in the permissible range, and we deduce 
\begin{multline*}
    |S(\alpha)|^4 \ll P^{3ns+\varepsilon} \left( 1 +P^2 |\theta| N(\mathfrak{a}_\gamma)^{1/n} +  {P}^{-1}{N(\mathfrak{a}_\gamma)^{1/n}}  \right. \\
    \left. + \min \left(P N(\mathfrak{a}_\gamma)^{-1/n} , \left(N(\mathfrak{a}_\gamma)^{1/n} |\theta| P\right)^{-1} \right) \right)^{\frac{ns}{2}}.
\end{multline*}
In particular, if $N(\mathfrak{a}_\gamma)^{1/n} \leq P^{3/2}$ then $P^{-1} N (\mathfrak{a}_\gamma)^{1/n} \leq P^{1/2}$ and so we find
\[
|S(\alpha)| \ll P^{ns+\varepsilon} \left( N(\mathfrak{a}_\gamma)^{1/n} |\theta|  + (N(\mathfrak{a}_\gamma)^{1/n} |\theta| P^3)^{-1} + P^{-3/2} \right)^{\frac{ns}{8}}
\]
in this case.
Finally since $X^{1/2} \leq X/Y + Y$ for any two positive real numbers $X$ and $Y$ we see that the last term of the right hand side above is dominated by the other two summands. We summarise the main result of this section.
\begin{lemma} \label{lem.weyl_estimate}
Let $K/\Q$ be an imaginary quadratic number field. Let $\alpha \in K_\R$ and write $\alpha = \gamma + \theta$ where $\gamma \in K$ and $\theta \in K_\R$. If $N(\mathfrak{a}_\gamma)^{1/n} \leq P^{3/2}$ then we have
\begin{equation} \label{eq.weyl_bound}
    S(\alpha) \ll P^{ns+\varepsilon} \left( N(\mathfrak{a}_\gamma)^{1/n} |\theta|  + (N(\mathfrak{a}_\gamma)^{1/n} |\theta| P^3)^{-1} \right)^{\frac{ns}{8}}. 
\end{equation}
\end{lemma}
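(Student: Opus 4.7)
The plan is to apply a classical Weyl differencing argument followed by two successive applications of Davenport's shrinking lemma, then use the divisibility lemma (Lemma~\ref{lem.small_torus_norm_implies_divisibility}) to force the bilinear forms in the resulting count to vanish identically, so that Davenport's Geometric Condition can be invoked.

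First, I would take as starting point the Weyl differencing estimate \eqref{eq.sum_after_weyl_diff} and, by the standard trick of partitioning the torus $K_\R/\O$ into boxes of sidelength $\asymp 1/P$, reduce to the bound
\[
|S(\alpha)|^4 \ll P^{2ns+\varepsilon} N(\alpha, P),
\]
where $N(\alpha, P)$ counts integer pairs $(\mathbf{x}, \mathbf{y})$ with $|\mathbf{x}|, |\mathbf{y}| < P$ satisfying $\norm{\mathrm{tr}(6\alpha\omega_j B_i(\mathbf{x}, \mathbf{y}))} < P^{-1}$ for every $i, j$. For each fixed $\mathbf{x}$, the map $\mathbf{y} \mapsto (\mathrm{tr}(6\alpha\omega_j B_i(\mathbf{x}, \mathbf{y})))_{i,j}$ is a linear map $\R^{ns} \to \R^{ns}$, so with a parameter $Z \in (0,1]$ the shrinking lemma (Lemma~\ref{lem.shrinking}) gives a saving of $Z^{ns}$ when the box in $\mathbf{y}$ is shrunk to size $ZP$ and the tolerance to $Z/P$. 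Repeating the argument with $\mathbf{x}$ and $\mathbf{y}$ swapped shrinks both boxes to sidelength $ZP$ and the tolerance to $Z^2/P$, at total cost $Z^{-2ns}$.

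The heart of the argument is then to apply Lemma~\ref{lem.small_torus_norm_implies_divisibility} with $m = 6\Delta B_i(\mathbf{x}, \mathbf{y})$, $M \asymp Z^2 P^2$, and $P_0^{-1} = Z^2/P$ to conclude $B_i(\mathbf{x}, \mathbf{y}) = 0$ for every $i$. This imposes a system of upper bounds on $Z$ in terms of $|\theta|$, $P$, and $N(\mathfrak{a}_\gamma)^{1/n}$; crucially, the imaginary-quadratic hypothesis is precisely what permits me to invoke alternative~(2) of that lemma, opening an additional admissible range for $Z$ which would be unavailable in a general number field. Once the bilinear forms vanish, the bilinear consequence \eqref{eq.rank_condition_for_bilinear} of Davenport's Geometric Condition bounds the remaining count by $O((ZP)^{ns})$, yielding $|S(\alpha)|^4 \ll P^{3ns+\varepsilon} Z^{-ns}$.

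The last step, and the main obstacle, is to optimise $Z$ by taking it as large as compatible with all four constraints and then collapse the resulting bound into the two-term form of the statement. The maximal admissible $Z^{-2}$ is (up to constants) the sum $1 + P^2|\theta|N(\mathfrak{a}_\gamma)^{1/n} + P^{-1}N(\mathfrak{a}_\gamma)^{1/n} + \min\!\bigl(PN(\mathfrak{a}_\gamma)^{-1/n}, (N(\mathfrak{a}_\gamma)^{1/n}|\theta|P)^{-1}\bigr)$. Under the hypothesis $N(\mathfrak{a}_\gamma)^{1/n} \le P^{3/2}$ the third summand is $O(P^{1/2})$, so after extracting the $ns/8$-th root it contributes at most $P^{-3/2}$, which the elementary inequality $X^{1/2} \le X/Y + Y$, applied with $X = P^{-3}$ and $Y = N(\mathfrak{a}_\gamma)^{1/n}|\theta|$, absorbs cleanly into the two terms of the statement. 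The main difficulty is purely bookkeeping: simultaneously satisfying the four constraints on $Z$, tracking which constraint is active in each regime of $(|\theta|, N(\mathfrak{a}_\gamma)^{1/n}, P)$, and checking that the $\min$-term never contributes more than the stated bound.
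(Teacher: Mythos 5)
Your proposal follows essentially the same route as the paper's own proof: Weyl differencing to the counting function $N(\alpha,P)$, two applications of Lemma~\ref{lem.shrinking}, then Lemma~\ref{lem.small_torus_norm_implies_divisibility} (with alternative~(2) supplying the extra admissible range for $Z$ in the imaginary quadratic case) to force $B_i(\mathbf{x},\mathbf{y})=0$ and invoke \eqref{eq.rank_condition_for_bilinear}, followed by the same optimisation of $Z$ and the same absorption of the $P^{-3/2}$ term via $X^{1/2}\le X/Y+Y$ under $N(\mathfrak{a}_\gamma)^{1/n}\le P^{3/2}$. The argument is correct and matches the paper step for step.
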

This bound will be useful for the range in the minor arcs when the parameter $\theta$ is small.

\section{Pointwise van der Corput Differencing and the singular series} \label{sec.vdc_pointwise_sing_series}
In this section we will perform a pointwise van der Corput differencing argument, in order to show that the singular series converges absolutely. This argument works over a general number field. We start by considering the exponential sum $S(\gamma)$, where $\gamma \in K$ and we set $P = N(\mathfrak{a}_\gamma)$. Further in this section we take the box $\mathcal{B} = \mathcal{B}_{\mathfrak{S}} = \{ (\sum_{j} x_{ij} \omega_j)_i \in K_\R^s \colon 0 \leq  x_{ij} < 1  \}$ so that the goal of this section is to study the sum $S_\gamma$ as it was defined in Section~\ref{sec.major_arcs_cubic}. To be completely explicit  with our choice of box we then have
\[
S_\gamma = S(\gamma) = \sum_{0 \leq \mathbf{x} < N(\mathfrak{a}_\gamma)} e \left( \mathrm{tr}(\gamma C(\mathbf{x})) \right),
\]
where the condition $0 \leq \mathbf{x} < N(\mathfrak{a}_\gamma)$ denotes the sum over elements $\mathbf{x} = \left(\sum_{j} x_{ij} \omega_j\right)_i \in \O^s$ such that $0 \leq  x_{ij} < N(\mathfrak{a}_\gamma)$ holds.
The main goal of this section is to establish the bound
\begin{equation} \label{eq.S_gamma_bound}
S_\gamma \ll N(\mathfrak{a}_\gamma)^{s(n-1/6)+\varepsilon}.
\end{equation}
Let $H$ be a positive integer that satisfies $H \leq N(\mathfrak{a}_\gamma)$. Clearly we have
\[
H^{ns} S(\gamma) = \sum_{0 \leq \mathbf{h} < H } \sum_{\substack{ 0 \leq \mathbf{x} < N(\mathfrak{a}_\gamma) \\  0 \leq \mathbf{x}+\mathbf{h} < N(\mathfrak{a}_\gamma)}}e \left( \mathrm{tr}(\gamma C(\mathbf{x}+\mathbf{h})) \right).
\]
Interchanging the order of summation gives
\[
H^{ns} S(\gamma) = \sum_{0 \leq \mathbf{x} < N(\mathfrak{a}_\gamma)} \; \sum_{\substack{ 0 \leq \mathbf{h} < H \\ 0 \leq \mathbf{x} + \mathbf{h} < N(\mathfrak{a}_\gamma)}} e \left( \mathrm{tr}(\gamma C(\mathbf{x}+\mathbf{h})) \right).
\]
Since $H \leq N(\mathfrak{a}_\gamma)$ the number of non-zero summands of the inner sum is bounded by $O(N(\mathfrak{a}_\gamma)^{ns})$. Therefore, an application of Cauchy-Schwarz yields
\[
H^{2ns} |S(\gamma)|^2 \ll N(\mathfrak{a}_\gamma)^{ns} \sum_{0 \leq \mathbf{x} < N(\mathfrak{a}_\gamma)} \; \left\lvert\sum_{\substack{ 0 \leq \mathbf{h} < H \\ 0 \leq \mathbf{x} + \mathbf{h} < N(\mathfrak{a}_\gamma)}} e \left( \mathrm{tr}(\gamma C(\mathbf{x}+\mathbf{h})) \right) \right\rvert^2.
\]
Expanding the square one obtains
\[
H^{2ns} |S(\gamma)|^2 \ll N(\mathfrak{a}_\gamma)^{ns} \sum_{0 \leq \mathbf{x} < N(\mathfrak{a}_\gamma)} \sum_{\substack{ 0 \leq \mathbf{h}_1, \mathbf{h}_2 < H \\ 0 \leq \mathbf{x} + \mathbf{h}_1, \mathbf{x} + \mathbf{h}_2 < N(\mathfrak{a}_\gamma)}} e \left( \mathrm{tr}(\gamma C(\mathbf{x}+\mathbf{h}_1) - C(\mathbf{x}+\mathbf{h}_2)) \right).
\]
Set $\mathbf{y} = \mathbf{x} +\mathbf{h}_2$ and $\mathbf{h} = \mathbf{h}_1-\mathbf{h}_2$. Note that after this change of coordinates  each value of $\mathbf{h}$ in the sum above appears at most $H^{ns}$ times. Therefore the previous display gives
\begin{equation} \label{eq.S_gamma_sum_over_rankythings}
H^{ns} |S(\gamma)|^2 \ll  N(\mathfrak{a}_\gamma)^{ns}  \sum_{|\mathbf{h}| \leq H} \left\lvert T(\mathbf{h}, \gamma)\right\rvert,
\end{equation}
where
\[
T(\mathbf{h}, \gamma) = \sum_{\mathbf{y} \in \mathcal{R}(\mathbf{h})} e\left(\mathrm{tr}( \gamma (C(\mathbf{y}+\mathbf{h}) - C(\mathbf{y}))) \right),
\]
and where $\mathcal{R}(\mathbf{h})$ is a box whose sidelengths are $O(N(\mathfrak{a}_\gamma))$. We take the square of the absolute value of this expression, and expand the resulting sum in order to obtain
\[
| T(\mathbf{h}, \gamma)|^2 = \sum_{\mathbf{y}, \mathbf{z} \in \mathcal{R}(\mathbf{h})} e\left( \mathrm{tr} (\gamma(C(\mathbf{y}+\mathbf{h}) - C(\mathbf{y}) - C(\mathbf{z}+\mathbf{h}) + C(\mathbf{z}) )) \right).
\]
Making the change of variables $\mathbf{y} = \mathbf{z} + \mathbf{w}$ we find
\[
| T(\mathbf{h}, \gamma)|^2 = \sum_{|\mathbf{w}|<N(\mathfrak{a}_{\gamma})}\sum_{\mathbf{z}} e\left( \mathrm{tr} (\gamma C(\mathbf{w}, \mathbf{h}, \mathbf{z})) \right),
\]
where the inner sum ranges over a (potentially empty) box $\mathcal{S}(\mathbf{h},\mathbf{w})$ whose sidelengths are  $O(N(\mathfrak{a}_\gamma))$ and where we write $C(\mathbf{w}, \mathbf{h}, \mathbf{z})$ for the multilinear form given by
\[
C(\mathbf{w}, \mathbf{h}, \mathbf{z}) = C(\mathbf{w}+\mathbf{h}+\mathbf{z}) - C(\mathbf{w}+\mathbf{z}) - C(\mathbf{h}+\mathbf{z}) + C(\mathbf{z}).
\]
In particular we have
\[
C(\mathbf{w}, \mathbf{h}, \mathbf{z}) = 6 \sum_{i=1}^s z_i B_i(\mathbf{w},\mathbf{h}) + \Psi(\mathbf{w},\mathbf{h}),
\]
where $B_i$ are the bilinear forms associated to $C$, and where $\Psi$ is a certain polynomial whose precise shape is of no importance to us. Therefore we find 
\begin{align*}
   | T(\mathbf{h}, \gamma)|^2 = \sum_{\mathbf{w}} \sum_{\mathbf{z}} e\left( \mathrm{tr} \left(6 \gamma \sum_{i=1}^s z_i B_i(\mathbf{w},\mathbf{h}) + \gamma \Psi(\mathbf{w},\mathbf{h})\right) \right).
\end{align*}
Writing $z_i = \sum_j z_{ij} \omega_j$ we may regard the inner sum as an exponential sum over integer variables $z_{ij}$. This is  a linear exponential sum and the coefficient of $z_{ij}$ is given by $6 \mathrm{tr} (\gamma \omega_j B_i(\mathbf{w}, \mathbf{h}))$. A standard argument regarding geometric sums now yields
\[
| T(\mathbf{h}, \gamma)|^2 \ll \sum_{\mathbf{w}} \prod_{i=1}^s \prod_{j=1}^n \min \left(N(\mathfrak{a}_\gamma), \norm{6 \mathrm{tr}( \gamma \omega_j B_i(\mathbf{w}, \mathbf{h})) }^{-1} \right).
\]
In particular the same argument that led to~\eqref{eq.exp_sum_bound_solns_bilinears} shows that
\begin{equation} \label{eq.bound_for_T}
    |T(\mathbf{h}, \gamma)|^2 \ll  N(\mathfrak{a}_\gamma)^{ns+\varepsilon} N(\gamma, N(\mathfrak{a}_\gamma), \mathbf{h}), 
\end{equation}
where
\[
N(\gamma, N(\mathfrak{a}_\gamma), \mathbf{h}) = \# \left\{\mathbf{w} \in \O^s \colon |\mathbf{w}| < N(\mathfrak{a}_\gamma), \, \norm{6 \mathrm{tr}(\gamma \omega_j B_i(\mathbf{w}, \mathbf{h})) } < N(\mathfrak{a}_\gamma)^{-1}, \, \forall i,j \right\}.
\]
Note that the condition in the sum already implies that $6 \Delta B_i(\mathbf{x},\mathbf{y}) \in \mathfrak{a}_\gamma$ holds for all $i$, but we prefer to write it in the above shape in order to highlight the similarities with the argument in the previous section.

As in Section~\ref{sec.weyl_differencing_cubes} we may regard $\mathbf{w} \mapsto \mathrm{tr}(\gamma \omega_j B_i(\mathbf{w}, \mathbf{h}))$ as a linear map $\R^{ns} \rightarrow \R^{ns}$. Hence we can apply Lemma~\ref{lem.shrinking} so that for any $Z \in (0,1]$ we have
\[
N(\gamma, N(\mathfrak{a}_\gamma), \mathbf{h}) \ll Z^{-ns} \# \left\{\mathbf{w} \in \O^s \colon |\mathbf{w}| < Z N(\mathfrak{a}_\gamma), \, \norm{6 \mathrm{tr}(\gamma \omega_j B_i(\mathbf{w}, \mathbf{h})) } < ZN(\mathfrak{a}_\gamma)^{-1}, \, \forall i,j \right\}.
\]
We now wish to choose $Z$ in such a way that we can apply Lemma~\ref{lem.small_torus_norm_implies_divisibility}. In the notation of this lemma we have $m = \Delta \omega_j B_i(\mathbf{w},\mathbf{h})$ and $\theta = 0$. We take $Z \in (0,1]$ such that $Z \asymp  H^{-1} N(\mathfrak{a}_\gamma)^{\frac{1}{n}-1}$ for a suitable implied constant. Then Lemma~\ref{lem.small_torus_norm_implies_divisibility} implies
\[
N(\gamma, P, \mathbf{h}) \ll H^{ns} N(\mathfrak{a}_\gamma)^{ns-s}  \# \left\{\mathbf{w} \in \O^s \colon |\mathbf{w}| < H^{-1} N(\mathfrak{a}_\gamma)^{1/n}, \,   B_i(\mathbf{w}, \mathbf{h})  =0, \, \forall i,j \right\}.
\]
Recalling that $r(\mathbf{h})$ is the rank of $B_i(\mathbf{h}, \cdot) \colon K_\R^s \rightarrow K_\R^s$, using~\eqref{eq.bound_for_T} we find
\[
T(\mathbf{h},\gamma) \ll N(\mathfrak{a}_\gamma)^{ns-\frac{r(\mathbf{h})}{2}+\varepsilon} H^{\frac{nr(\mathbf{h})}{2}}.
\]
Hence~\eqref{eq.S_gamma_sum_over_rankythings} delivers
\[
|S(\gamma)|^2 \ll H^{-ns} N(\mathfrak{a}_\gamma)^{2ns+\varepsilon} \sum_{|\mathbf{h}| \leq H} \left(H^n N(\mathfrak{a}_\gamma)^{-1}\right)^{\frac{r(\mathbf{h})}{2}}.
\]
By~\eqref{eq.rank_condition}, for any $r$ the number of $\mathbf{h}$ with $r(\mathbf{h})=r$ is $O(H^{nr})$. Therefore we find
\[
|S(\gamma)|^2 \ll H^{-ns} N(\mathfrak{a}_\gamma)^{2ns+\varepsilon} \sum_{r=0}^s \left(H^{3n} N(\mathfrak{a}_\gamma)^{-1}\right)^{\frac{r}{2}}.
\]
The sum is maximal either when $r = 0$ or when $r = s$, and thus
\[
|S(\gamma)|^2 \ll H^{-ns} N(\mathfrak{a}_\gamma)^{2ns+\varepsilon} \left(1 + H^{3ns/2} N(\mathfrak{a}_\gamma)^{-s/2} \right).
\]
Choosing $H = \lfloor N(\mathfrak{a}_\gamma) \rfloor^{1/3n}$ this finally yields
\[
S(\gamma) \ll N(\mathfrak{a}_\gamma)^{s(n-1/6)+\varepsilon}.
\]

\subsection{Proof of Theorem~\ref{thm.sing_series_in_13}}
By Theorem~\ref{thm.major_arcs_if_sing_converges} it suffices to show that $\mathfrak{S}(R)$ converges absolutely as $R \rightarrow \infty$.

 Given a positive integer $k$ the number of ideals of $\O$ of norm $k$ is $O(k^\varepsilon)$ using the divisor bound. Hence together with Lemma~\ref{lem.denominator_ideal_fixed}
we obtain that the number of $\gamma \in K/\mathcal{O}$ such that $N(\mathfrak{a}_\gamma) = k$ is bounded by $O(k^{1+\varepsilon})$. Thus, using \eqref{eq.S_gamma_bound} we find
\[
\mathfrak{S}(R) \ll \sum_{k=0}^R k^{-ns+1+\varepsilon} k^{ns-s/6} = \sum_{k=0}^R k^{1-s/6+\varepsilon}.
\]
Therefore $\mathfrak{S}(R)$ converges absolutely to some real number $\mathfrak{S}$ as $R \rightarrow \infty$ provided $s \geq 13$. \qed

We remark that using the ideas of Heath-Brown~\cite[Section 7]{heath2007cubic} it would be possible to establish the absolute convergence of $\mathfrak{S}(R)$ already for $s \ge 11$.

\section{Van der Corput on average} \label{sec.vdc_on_average}

In this section, we work towards a bound for the Weyl sum $S(\alpha)$ on the minor arcs. As observed by Heath-Brown, the simple pointwise van der Corput differencing is not sufficient to improve on Davenport's result for $s \ge 16$.

It is however possible to exploit the fact that we are averaging both over the modulus $\mathfrak{a}_{\gamma}$ as well as the integration variable $\beta$ in the minor arcs, thus leading to a version of van der Corput differencing on average.

From now on we continue to work with the box $\mathcal{B}=\mathcal{B}(\mathbf{z})$ as defined in the beginning of Section~\ref{sec.major_arcs_cubic}. Instead of a pointwise bound for $S(\alpha)$, we will seek to bound the mean-square average
\[
M(\alpha,\kappa)=\int_{\vert \beta-\alpha\vert<\kappa} \vert S(\beta)\vert^2 d\beta
\]
for $\alpha \in K_{\mathbb{R}}$ and a small parameter $\kappa \in (0,1)$, where we remind the reader that the integration is over a region of $K_{\mathbb{R}}$.

In conjunction with the Cauchy-Schwarz inequality and an appropriate dyadic dissection of the minor arcs, a satisfactory bound for $M(\alpha,\kappa)$ will be sufficient to control the total minor arc contribution.

The idea now is that the mean square integral automatically shortens all the $n$ coordinates of $h_1$ in the van der Corput differencing, allowing us to effectively save a factor $\frac{H^n}{P^n}$ over the pointwise bound. Here and throughout we denote $\mathbf{h} = (h_i)_i = \left( \sum_{j}h_{ij} \omega_j \right)_i \in \O^s$.

To this end, we initiate the van der Corput differencing with parameters $1 \le H_{ij} \le P$ to be determined, obtaining
\[
\prod_{i,j} H_{ij} S(\beta)=\sum_{0 \le h_{ij} < H_{ij}} \sum_{\mathbf{x}+\mathbf{h} \in P\mathcal{B}} e\left(\tr(\beta C(\mathbf{x}+\mathbf{h}))\right)=\sum_{\mathbf{x} \in \mathcal{O}^s} \sum_{\mathbf{x}+\mathbf{h} \in P\mathcal{B}} e\left(\tr(\beta C(\mathbf{x}+\mathbf{h}))\right),
\]
where implicitly we still restrict to $\mathbf{h}$ such that $0 \le h_{ij} < H_{ij}$ is satisfied.
Note that the condition $H_{ij} \le P$ ensures that the sum over $\mathbf{x}$ is restricted to $O(P^{ns})$ many summands. An application of Cauchy-Schwarz thus yields
\[
\prod_{i,j} H_{ij}^2 \vert S(\beta)\vert^2 \ll P^{ns}\sum_{\mathbf{x} \in \mathcal{O}^s} \left\vert \sum_{\mathbf{x}+\mathbf{h} \in P\mathcal{B}} e\left(\tr(\beta C(\mathbf{x}+\mathbf{h}))\right)\right\vert^2.
\]
Opening the square on the RHS, we obtain
\[
\prod_{i,j} H_{ij}^2 \vert S(\beta)\vert^2 \ll P^{ns}\sum_{\mathbf{x} \in \mathcal{O}^s} \sum_{\mathbf{x}+\mathbf{h_1}, \mathbf{x}+\mathbf{h_2} \in P\mathcal{B}} e\left(\tr (\beta \left[ C(\mathbf{x}+\mathbf{h_1})-C(\mathbf{x}+\mathbf{h_2})\right])\right).
\]
On substituting $\mathbf{y}=\mathbf{x}+\mathbf{h_2}$ and $\mathbf{h}=\mathbf{h_1}-\mathbf{h_2}$, this becomes
\[
\prod_{i,j} H_{ij}^2 \vert S(\beta)\vert^2 \ll P^{ns}\sum_{\vert h_{ij}\vert \le H_{ij}} w(\mathbf{h}) \sum_{\mathbf{y} \in \mathcal{R}(\mathbf{h})}  e\left(\tr (\beta \left[ C(\mathbf{y}+\mathbf{h})-C(\mathbf{y})\right])\right)
\]
where $w(\mathbf{h})=\#\{\mathbf{h_1},\mathbf{h_2}: \mathbf{h}=\mathbf{h_1}-\mathbf{h_2}\} \le \prod_{i,j} H_{ij}$ and $\mathcal{R}(\mathbf{h})$ is a certain box depending only on $\mathbf{h}$.

Instead of taking absolute values, we now first integrate over $\beta = \sum_{j} \beta_j \omega_j$ with a smooth cutoff function to obtain
\begin{align*}
M(\alpha,\kappa) &\le e^n \int_{K_{\mathbb{R}}} \exp\left(-\frac{\sum_j (\beta_j-\alpha_j)^2}{\kappa^2}\right) \cdot \vert S(\beta)\vert^2 d\beta\\
&\ll \frac{P^{ns}}{\prod_{i,j} H_{ij}^2}\sum_{\vert h_{ij}\vert \le H_{ij}} w(\mathbf{h}) \sum_{\mathbf{y} \in \mathcal{R}(\mathbf{h})} I(\mathbf{h},\mathbf{y})\\
&\ll \frac{P^{ns}}{\prod_{i,j} H_{ij}} \sum_{\vert h_{ij}\vert \le H_{ij}} \left\vert \sum_{\mathbf{y} \in \mathcal{R}(\mathbf{h})} I(\mathbf{h},\mathbf{y})\right\vert,
\end{align*}
where
\begin{align*}
I(\mathbf{h},\mathbf{y})&=\int_{K_{\mathbb{R}}} \exp\left(-\frac{\sum_j (\beta_j-\alpha_j)^2}{\kappa^2}\right) \cdot e\left(\tr (\beta \left[ C(\mathbf{y}+\mathbf{h})-C(\mathbf{y})\right])\right) d\beta\\
&=\pi^{n/2} \kappa^n \prod_{j=1}^n \exp(-\pi^2\kappa^2 \tr (\omega_j\left[ C(\mathbf{y}+\mathbf{h})-C(\mathbf{y})\right])^2) \cdot e(\tr (\alpha\left[ C(\mathbf{y}+\mathbf{h})-C(\mathbf{y})\right])).
\end{align*}
Heuristically, for large $h_1 \in \mathcal{O}$, we should have $C(\mathbf{y}+\mathbf{h})-C(\mathbf{y}) \approx h_1 \cdot \frac{\partial C(\mathbf{y})}{\partial x_1}$ so that by our choice of the box $\mathcal{B}(\mathbf{z})$, this difference is large. But then for some $j$, the trace of this number multiplied with $\omega_j$ must be large as well, leading to a negligible contribution to $M(\alpha,\kappa)$ from those terms, thus effectively cutting down the range to small $h_1$.

We now fix the choice $H_{ij}=H$ for $i \ne 1$ and $H_{1j}=cP$ for a sufficiently small constant $c$ and make the above heuristic discussion precise. For $\mathbf{y} \in \mathcal{R}(\mathbf{h})$ we have
\[C(\mathbf{y}+\mathbf{h})-C(\mathbf{y})=h_1 \cdot \frac{\partial C(\mathbf{y})}{\partial x_1}+O(HP^2+\vert h_1\vert^2 \vert \mathbf{y}\vert).\]
If the width of the box $\mathcal{B}(\bm{z})$ and $c$ are sufficiently small, the fact that $\frac{\partial C(\mathbf{z})}{\partial x_1} \ne 0$ then implies that
\[\left\vert C(\mathbf{y}+\mathbf{h})-C(\mathbf{y})\right\vert \gg \vert h_1\vert \cdot P^2\]
unless $\vert h_1\vert \ll H$. Additionally, unless $\vert h_1\vert \ll \frac{(\log P)^2}{\kappa P^2}$, we even have that
\[\left\vert C(\mathbf{y}+\mathbf{h})-C(\mathbf{y})\right\vert \gg \frac{(\log P)^2}{\kappa}\]
so that for some $j$ we must have
\[\left\vert \tr \left(\omega_j\left[C(\mathbf{y}+\mathbf{h})-C(\mathbf{y})\right]\right)\right\vert \gg \frac{(\log P)^2}{\kappa}\]
and we infer from our previous calculations that the contribution of such $\mathbf{h}$ to $M(\alpha,\kappa)$ is $O(1)$. Hence,
\[M(\alpha,\kappa) \ll 1+\frac{P^{ns-n}}{H^{ns-n}} \sum_{\vert h_i\vert \ll H} \left\vert\sum_{\mathbf{y}} I(\mathbf{h},\mathbf{y})\right\vert\]
if we choose $\kappa \asymp \frac{(\log P)^2}{HP^2}$.

Moreover, the range $\vert \beta-\alpha\vert \ge \kappa \log P$ in the definition of $I(\mathbf{h},\mathbf{y})$ clearly gives a total contribution of $O(1)$ to $M(\alpha,\kappa)$ so that we end up with the estimate
\[
M(\alpha,\kappa) \ll 1+\frac{P^{ns-n}}{H^{ns-n}} \sum_{\vert h_{i}\vert  \ll H} \int_{\vert \beta-\alpha\vert<\kappa \log P} \vert T(\mathbf{h},\beta)\vert d\beta
\]
with
\[
T(\mathbf{h},\beta)=\sum_{\mathbf{y} \in \mathcal{R}(\mathbf{h})}  e\left(\tr (\beta \left[ C(\mathbf{y}+\mathbf{h})-C(\mathbf{y})\right])\right).
\]
As in Section~\ref{sec.vdc_pointwise_sing_series}, we obtain
\[\vert T(\mathbf{h},\beta)\vert^2 \ll P^{ns+\varepsilon} N(\beta,P,\mathbf{h})\]
where
\[N(\beta,P,\mathbf{h})=\#\{\mathbf{w} \in \mathcal{O}^s: \vert \mathbf{w}\vert<P, \|6\tr (\beta\omega_j B_i(\mathbf{w},\mathbf{h}))\|<P^{-1}, \forall i,j\}\]
so that
\begin{equation}\label{eq.malphakappa}
M(\alpha,\kappa) \ll 1+\frac{\kappa^n P^{\frac{3ns}{2}-n+\varepsilon}}{H^{ns-n}} \sum_{\vert h_i\vert \ll H} \max_{\beta \in \mathcal{I}} N(\beta, P,\mathbf{h})^{\frac{1}{2}}
\end{equation}
for $\mathcal{I}=\{\beta: \vert \beta-\alpha\vert \le \kappa \log P\}$.

We next claim that
\[
\max_{\beta \in \mathcal{I}} N(\beta,P,\mathbf{h}) \ll P^{\varepsilon}N(\alpha,P,\mathbf{h})
.\]
Indeed, consider a vector $\mathbf{w}$ counted by $N(\beta,P,\mathbf{h})$. It thus satisfies $\vert \mathbf{w}\vert<P$ as well as $\|6\tr (\beta\omega_j B_i(\mathbf{w},\mathbf{h}))\|<P^{-1}$ so that
\[
\|6\tr (\alpha\omega_j B_i(\mathbf{w},\mathbf{h}))\| \ll \frac{1}{P}+\vert \beta-\alpha\vert \cdot \vert B_i(\mathbf{w},\mathbf{h})\vert \ll \frac{1}{P}+\kappa \log P \cdot HP \ll \frac{(\log P)^3}{P}.
\]
We thus obtain
\[N(\beta,P,\mathbf{h}) \ll \#\{\mathbf{w} \in \mathcal{O}^s: \vert \mathbf{w}\vert<P, \|6\tr (\alpha\omega_j B_i(\mathbf{w},\mathbf{h}))\| \ll \frac{(\log P)^3}{P}, \forall i,j\} \ll P^{\varepsilon}N(\alpha,P,\mathbf{h})\]
where the last estimate is a consequence of Lemma~\ref{lem.shrinking} upon choosing suitable $Z \asymp (\log P)^{-3}$.

We conclude that
\[
M(\alpha,\kappa) \ll 1+\frac{\kappa^n P^{\frac{3ns}{2}-n+\varepsilon}}{H^{ns-n}} \sum_{\vert h_i\vert \ll  H} N(\alpha, P,\mathbf{h})^{\frac{1}{2}}.
\]
Let $\alpha=\gamma+\theta$ with $\gamma \in K$ and $\theta \in K_{\mathbb{R}}$ (which we think of as being small). We are now prepared for an application of Lemmas \ref{lem.shrinking} and \ref{lem.small_torus_norm_implies_divisibility}. Indeed, Lemma \ref{lem.shrinking} implies that
\[
N(\alpha,P,\mathbf{h}) \ll Z^{-ns} \#\{\mathbf{w} \in \mathcal{O}^s: \vert \mathbf{w}\vert<ZP, \|6\tr (\alpha\omega_j B_i(\mathbf{w},\mathbf{h}))\|<ZP^{-1}, \forall i,j\}. 
\]
Following Heath-Brown, we will make two different choices of $Z$: In the first one, we will choose $Z=Z_1$ sufficiently small so that Lemma \ref{lem.small_torus_norm_implies_divisibility} implies that $B_i(\mathbf{w},\mathbf{h})=0$. In the second choice $Z=Z_2$, we will only force $6\Delta B_i(\mathbf{w},\mathbf{h}) \in \mathfrak{a}_{\gamma}$, a consequence followed by a study of how often such a divisibility property can occur, crucially using an average over $\gamma$.

By Lemma \ref{lem.small_torus_norm_implies_divisibility}, if we choose $Z \le 1$ satisfying
\[Z \ll \frac{P}{N(\mathfrak{a}_{\gamma})^{1/n}}\]
and
\[Z \ll \frac{1}{PH\vert \theta\vert N(\mathfrak{a}_{\gamma})^{1/n}}\]
we can conclude that $6\Delta B_i(\mathbf{w},\mathbf{h}) \in \mathfrak{a}_{\gamma}$. If, moreover
\[Z \ll \frac{N(\mathfrak{a}_{\gamma})^{1/n}}{PH}\]
or
\[Z \ll \vert \theta\vert PN(\mathfrak{a}_{\gamma})^{1/n}\]
we obtain that $B_i(\mathbf{w},\mathbf{h})=0$. Here, all the implicit constants need to be sufficiently small in order to satisfy the conditions in Lemma \ref{lem.small_torus_norm_implies_divisibility}.

Writing 
\begin{equation}\label{eq.defeta}
    \eta=\vert \theta\vert+\frac{1}{P^2H}
\end{equation}
we should therefore choose
\[
Z_1 \asymp \min\left(N(\mathfrak{a}_{\gamma})^{1/n}P\eta, \frac{1}{PH\eta N(\mathfrak{a}_{\gamma})^{1/n}}\right),
\]
noting that this automatically implies that $Z_1 \le 1$. Similarly we should choose
\[
Z_2 \asymp \min\left(1, \frac{1}{PH\eta N(\mathfrak{a}_{\gamma})^{1/n}}\right).
\]
In the application with $Z=Z_1$, we thus obtain
\begin{align*}
N(\alpha,P,\mathbf{h}) &\ll Z_1^{-ns} \#\{\mathbf{w} \in \mathcal{O}^s: \vert \mathbf{w}\vert<Z_1P, B_i(\mathbf{w},\mathbf{h})=0, \forall i\}\\
&\ll Z_1^{-ns} \cdot (Z_1P)^{n(s-r)}\\
&\ll P^{ns} \cdot \left(\frac{1}{N(\mathfrak{a}_{\gamma})^{1/n}P^2\eta}+H\eta N(\mathfrak{a}_{\gamma})^{1/n}\right)^{nr}
\end{align*}
with $r=r(\mathbf{h})$.
Instead, in the application with $Z=Z_2$, we end up with the bound
\begin{equation} \label{eq.latticecounting}
N(\alpha,P,\mathbf{h}) \ll Z_2^{-ns} \#\{\mathbf{w} \in \mathcal{O}^s: \vert \mathbf{w}\vert<Z_2P, 6\Delta B_i(\mathbf{w},\mathbf{h}) \in \mathfrak{a}_{\gamma}, \forall i\}.
\end{equation}
We thus need to count vectors $\mathbf{w}$ with $6\Delta B_i(\mathbf{w},\mathbf{h}) \in \mathfrak{a}_{\gamma}$. For any prime ideal $\mathfrak{p}$, let $r_{\mathfrak{p}}(\mathbf{h})$ be the rank of $M(\mathbf{h})$ modulo $\mathfrak{p}$. Clearly, $r_{\mathfrak{p}}(\mathbf{h}) \le r(\mathbf{h})=r$ with strict inequality if and only if $\mathfrak{p}$ divides all $r \times r$ minors of $M(\mathbf{h})$. This means that there are only relatively few such \lq{}bad\rq{} primes, which we will exploit later.

We now decompose $\mathfrak{a}_{\gamma}=\mathfrak{q}_1 \cdot \mathfrak{q}_2$ where $\mathfrak{q}_1$ contains all the primes $\mathfrak{p}$ dividing $\mathfrak{a}_{\gamma}$ with $r_{\mathfrak{p}}(\mathbf{h})<r$ and $\mathfrak{q}_2$ consists of those with $r_{\mathfrak{p}}(\mathbf{h})=r$.

As we are looking for an upper bound, we can replace $\mathfrak{a}_{\gamma}$ by the larger $\mathfrak{q}_2$ in \eqref{eq.latticecounting}.

For fixed $\mathbf{h}$ with $r(\mathbf{h})=r$, the condition $6\Delta B_i(\mathbf{h},\mathbf{w}) \in \mathfrak{q}_2, \forall i$ defines a lattice $\Lambda(\mathbf{h})$ for $\mathbf{w} \in \mathcal{O}^{s}$ which we view as a lattice in $\mathbb{R}^{ns}$.

To estimate the number of integer points in such a lattice we use \cite[Lemma 5.1]{heath2007cubic} implying that
\begin{equation} \label{eq.latticebound}
\#\{\mathbf{x} \in \Lambda(\mathbf{h}): \vert \mathbf{x}\vert \le B \} \ll \prod_i \left(1+\frac{B}{\lambda_i}\right)
\end{equation}
where $\lambda_1,\dots,\lambda_{ns}$ are the successive minima of $\Lambda(\mathbf{h})$.

In order to make this estimate useful, we need a bound on the determinant/covolume $d(\Lambda)$ which is proportional to $\prod_i \lambda_i$ as well as a bound on the skewness of the measure, i.e. upper and lower bounds for the $\lambda_i$.

For the determinant, we note that for $\mathfrak{p}^e \mid \mathfrak{q}_2$, the matrix $M(\mathbf{h})$ has rank $r$ modulo $\mathfrak{p}$ (hence also modulo $\mathfrak{p}^e$) and therefore $B_i(\mathbf{h},\mathbf{w})$ has $N(\mathfrak{p}^e)^{s-r}$ solutions modulo $\mathfrak{p}^e$ so that $N(\mathfrak{p}^e)^r$ divides $d(\Lambda)$.
It thus follows that $N(\mathfrak{q}_2)^r \mid d(\Lambda)$ and hence $d(\Lambda) \ge N(\mathfrak{q}_2)^r$.

Regarding the skewness, we clearly have $\lambda_i \gg 1$ for all $i$, while in the other direction we have $\mathfrak{q}_2\mathcal{O}^s \subset \Lambda(\mathbf{h})$ so that Lemma \ref{lem.elements_in_ideals} implies $\lambda_i \ll N(\mathfrak{q}_2)^{1/n}$.

Optimizing the RHS of \eqref{eq.latticebound} with these constraints shows that the maximum is obtained when $rn$ of the $\lambda_i$ are of order $N(\mathfrak{q}_2)^{1/n}$ while the others are of order $1$.

This shows that
\[
N(\alpha,P,\mathbf{h}) \ll Z_2^{-ns} \left(1+\frac{Z_2P}{N(\mathfrak{q}_2)^{1/n}}\right)^{rn} \cdot (Z_2P)^{(s-r)n}=P^{ns} \left(\frac{1}{Z_2P}+\frac{1}{N(\mathfrak{q}_2)^{1/n}}\right)^{rn}
\]
if $Z_2P \gg 1$ but we note that the bound is trivially true for $Z_2P \ll 1$.

Recalling our choice of $Z_2$, this bound becomes
\[
N(\alpha,P,\mathbf{h}) \ll P^{ns} \left(\frac{1}{P}+\frac{1}{N(\mathfrak{q}_2)^{1/n}}+H\eta N(\mathfrak{a}_{\gamma})^{1/n}\right)^{rn}.
\]
Combining our two estimates, we obtain
\[
N(\alpha,P,\mathbf{h}) \ll P^{ns} \left(\frac{1}{P}+H\eta N(\mathfrak{a}_{\gamma})^{1/n}+\min\left(\frac{1}{N(\mathfrak{a}_{\gamma})^{1/n}P^2\eta},\frac{1}{N(\mathfrak{q}_2)^{1/n}}\right)\right)^{rn}.
\]
We now need to insert this into our expression for $M(\alpha,\kappa)$ which already involves the average over $\mathbf{h}$. Additionally, we want to average over $\mathfrak{a}_{\gamma}$ allowing us to use that $N(\mathfrak{q}_2)$ is almost as large as $N(\mathfrak{a}_{\gamma})$ most of the time.

Our object of study thus becomes
\begin{equation}\label{eq.Atheta}
A(\theta,R,H,P):=\sum_{\gamma: N(\mathfrak{a}_{\gamma})^{1/n} \sim R} \sum_{\vert h_i\vert  \ll H} 
N(\alpha,P,\mathbf{h})^{1/2}
\end{equation}

where we continue to write $\alpha=\gamma+\theta$ and we remind the reader of the notation $q \sim R$ for the dyadic condition $R<q \le 2R$. We then obtain
\[
A(\theta,R,H,P) \ll R^nP^{ns/2} \sum_{\vert h_i\vert \ll H} \sum_{N(\mathfrak{a})^{1/n} \sim R} \left(\frac{1}{P}+H\eta R+\min\left(\frac{1}{RP^2\eta},\frac{1}{N(\mathfrak{q}_2)^{1/n}}\right)\right)^{\frac{r(\mathbf{h})n}{2}}
\]
where we used that there are at most $N(\mathfrak{a})$ choices of $\gamma$ with $\mathfrak{a}_{\gamma}=\mathfrak{a}$ by Lemma~\ref{lem.denominator_ideal_fixed} and we remind the reader that $\mathfrak{q}_2$ depends on $\mathfrak{a}$ and $\mathbf{h}$.

We thus proceed to estimate
\[
V(\mathbf{h},R,\eta):=\sum_{N(\mathfrak{a})^{1/n} \sim R} \min\left(\frac{1}{RP^2\eta},\frac{1}{N(\mathfrak{q}_2)^{1/n}}\right)^{\frac{rn}{2}}
\]
for $r=r(\mathbf{h})$
via a dyadic decomposition as follows:
\begin{align*}
    V(\mathbf{h},R,\eta) &\ll P^{\varepsilon} \max_{S \le R} \sum_{N(\mathfrak{q}_1)^{1/n} \sim S} \sum_{N(\mathfrak{q}_2)^{1/n} \sim \frac{R}{S}}\min\left(\frac{1}{RP^2\eta},\frac{S}{R}\right)^{\frac{rn}{2}}\\
    &\ll P^{\varepsilon} \max_{S \le R} \frac{R^n}{S^n} \min\left(\frac{1}{RP^2\eta},\frac{S}{R}\right)^{\frac{rn}{2}}\#\{\mathfrak{q}_1: N(\mathfrak{q}_1)^{1/n} \le 2S\}.
\end{align*}
Now recall that $\mathfrak{q}_1$ only contains prime ideals dividing a certain non-zero $r \times r$ determinant $M_0$ of $M(\mathbf{h})$. In particular, we have $M_0 \ll H^r$. Applying Rankin's trick, we then obtain
\[
\#\{\mathfrak{q}_1: N(\mathfrak{q}_1)^{1/n} \le 2S\} \ll S^{\varepsilon} \sum_{\mathfrak{q}_1} N(\mathfrak{q}_1)^{-\varepsilon} =S^{\varepsilon} \prod_{\mathfrak{p} \mid M_0} \frac{1}{1-N(\mathfrak{p})^{-\varepsilon}} \ll S^{\varepsilon} M_0^{\varepsilon} \ll P^{\varepsilon}
\]
and thus
\[
V(\mathbf{h},R,\eta) \ll P^{\varepsilon} \max_{S \le R} \frac{R^n}{S^n} \min\left(\frac{1}{RP^2\eta},\frac{S}{R}\right)^{\frac{rn}{2}}.
\]
Maximizing for $S$ we find that 
\[
V(\mathbf{h},R,\eta) \ll P^{\varepsilon} \frac{R^n}{(RP^2\eta)^{rn/2}} \min(1,P^2\eta)^{ne(r)}
\]
with $e(0)=0$, $e(1)=\frac{1}{2}$ and $e(r)=1$ for $r \ge 2$.

Putting everything together, we obtain the estimate
\begin{align*}
A(\theta,R,H,P) &\ll R^{2n}P^{\frac{ns}{2}} \sum_{\vert h_i\vert \ll H} \left[\left(\frac{1}{P}+H\eta R\right)^{\frac{nr(\mathbf{h})}{2}}+\frac{1}{R^n}V(\mathbf{h},R,\eta)\right]\\
&\ll R^{2n}P^{\frac{ns}{2}+\varepsilon} \sum_{\vert h_i\vert \ll H} \left[\left(\frac{1}{P}+H\eta R\right)^{\frac{nr(\mathbf{h})}{2}}+\frac{1}{(RP^2\eta)^{\frac{r(\mathbf{h})n}{2}}} \min(1,P^2\eta)^{ne(r(\mathbf{h}))}\right]\\
&\ll R^{2n}P^{\frac{ns}{2}+\varepsilon} \sum_{r=0}^s H^{nr} \left[\left(\frac{1}{P}+H\eta R\right)^{\frac{nr}{2}}+\frac{1}{(RP^2\eta)^{\frac{rn}{2}}} \min(1,P^2\eta)^{ne(r)}\right]\\
&\ll \left[R^2P^{s/2+\varepsilon} \left(1+(RH^3\eta)^{s/2}+\frac{H^s}{P^{s/2}}+\frac{H^s}{(RP^2\eta)^{s/2}}\min(1,P^2\eta)\right)\right]^n.
\end{align*}
Finally, we argue that the third term $\frac{H^s}{P^{s/2}}$ is negligible.

Indeed, if $HRP\eta \ge 1$, then it is smaller than the second term. Otherwise, if $HRP\eta \le 1$, we have $(RP\eta)^{s/2} \le RP\eta \le \frac{1}{H} \le \min(1,\eta P^2)$ on recalling that $\eta \ge \frac{1}{P^2H}$ and hence the term $\frac{H^s}{P^{s/2}}$ is dominated by the fourth term in that case.

In any case, it now follows that
\begin{equation}\label{eq.Atheta_finalbound}
A(\theta,R,H,P) \ll \left[R^2P^{s/2+\varepsilon} \left(1+(RH^3\eta)^{s/2}+\frac{H^s}{(RP^2\eta)^{s/2}}\min(1,P^2\eta)\right)\right]^n.
\end{equation}

\section{The minor arcs} \label{sec.minor_arcs_cubes}

Finally, we synthesize the bounds obtained by Weyl and van der Corput differencing to estimate the total minor arc contribution $\int_{\mathfrak{m}} S(\alpha) d\alpha$.

We dissect $\mathfrak{m}$ with the help of the version of Dirichlet's Approximation Theorem provided by Lemma~\ref{lem.dir_approx_fractional_quadratic_imag}, applied for some parameter $1 \le Q \le P^{3/2}$ to be determined. Thus, every $\alpha \in K_{\mathbb{R}}$ has an approximation $\alpha=\gamma+\theta$ with $\gamma \in K$ and $N(\mathfrak{a}_{\gamma}) \le Q^n$ as well as $\vert \theta\vert \ll \frac{1}{N(\mathfrak{a}_{\gamma})^{1/n}Q}$.

The assumption $\alpha \in \mathfrak{m}$ then implies that $N(\mathfrak{a}_{\gamma})>P^{\nu}$ or $\vert \theta\vert>P^{-3+\nu}$.
Note that as the contribution to the minor arcs coming from $\vert \theta\vert \le \frac{1}{P^s}$ is $O(Q^{n+1})$, we may assume that $\vert \theta\vert \ge P^{-s}$. 

By a double dyadic decomposition with respect to $\vert \theta\vert$ and $N(\mathfrak{a}_{\gamma})^{1/n}$, we then obtain that
\[
\int_{\mathfrak{m}} S(\alpha) d\alpha \ll Q^{n+1}+P^{\varepsilon}\max_{R \le Q, \phi \le \frac{1}{RQ}} \Sigma(R,\phi)
\]
where
\[
\Sigma(R,\phi):=\sum_{\gamma: N(\mathfrak{a}_{\gamma})^{1/n} \sim R} \int_{\vert \theta\vert \sim \phi} \left\vert S(\gamma+\theta)\right\vert d\theta
\]
and we note that the region of integration is given by a rectangular annulus.

To establish Theorem \ref{thm.asymptotic}, it thus suffices to prove that $\Sigma(R,\phi) \ll P^{n(s-3)-\varepsilon}$. To employ the mean-value estimates from the previous section, we use Cauchy-Schwarz to obtain
\[
\Sigma(R,\phi) \ll R^n\phi^{n/2}\left(\sum_{\gamma: N(\mathfrak{a}_{\gamma})^{1/n} \sim R} \int_{\vert \theta\vert \sim \phi} \left\vert S(\gamma+\theta)\right\vert^2 d\theta\right)^{1/2}.
\]
We next cover the annulus $\vert \theta\vert \sim \phi$ with $O\left(\left(1+\frac{\phi}{\kappa}\right)^n\right)$ boxes of size $\kappa$, all centered at values of $\alpha=\gamma+\theta$ with $\vert \theta\vert \sim \phi$, so that we obtain
\[
\Sigma(R,\phi) \ll R^n\phi^{n/2} \left(1+\frac{\phi}{\kappa}\right)^{n/2} \max_{\vert \theta\vert \sim \phi} \left(\sum_{\gamma: N(\mathfrak{a}_{\gamma})^{1/n} \sim R} M(\gamma+\theta,\kappa)\right)^{1/2}
\]
and using \eqref{eq.malphakappa} and \eqref{eq.Atheta} we obtain
\[
\Sigma(R,\phi) \ll R^n\phi^{n/2} \left(1+\frac{\phi}{\kappa}\right)^{n/2} \max_{\vert \theta\vert \sim \phi} \left(R^{2n+\varepsilon}+\frac{\kappa^n P^{\frac{3ns}{2}-n+\varepsilon}}{H^{ns-n}} A(\theta,R,H,P)\right)^{1/2}
\]
so that \eqref{eq.Atheta_finalbound} implies that
\begin{equation} \label{eq.Sigma_last_bracket}
\Sigma(R,\phi) \ll \left[P^{\varepsilon}R^{2}\phi^{1/2} \left(1+\frac{\phi}{\kappa}\right)^{1/2}\left(1+\frac{\kappa P^{2s-1}}{H^{s-1}} E\right)^{1/2}\right]^n
\end{equation}
where $E=1+(RH^3\eta)^{s/2}+\frac{H^s}{(RP^2\eta)^{s/2}}P^2\eta$. Here we simply estimated $\min(1,P^2\eta) \le P^2\eta$ which turns out to be sufficient.

Suppose we can show that $E \ll 1$. Recall that $\kappa \asymp  \frac{(\log P)^2}{HP^2}$ so that
\[
1+\frac{\phi}{\kappa} \ll \frac{P^{\varepsilon}\eta}{\kappa}
\]
from the definition \eqref{eq.defeta} of $\eta$.

Since $\kappa \gg \frac{1}{P^s}$, both summands in the last bracket of~\eqref{eq.Sigma_last_bracket} are bounded by $\frac{\kappa P^{2s-1}}{H^{s-1}}$. Still assuming $E \ll 1$, we then obtain
\[
\Sigma(R,\phi) \ll \left[P^{\varepsilon}R^{2}\phi^{1/2} \eta^{1/2}\frac{ P^{s-\frac{1}{2}}}{H^{\frac{s-1}{2}}}\right]^n.
\]
Recalling our desired bound $\Sigma(R,\phi) \ll P^{n(s-3)-\varepsilon}$, it now suffices to prove that
\[
H^{s-1} \gg R^4\phi\eta P^{5+\varepsilon},
\]
still under the assumption $E  \ll 1$. Putting $s=14$ for convenience (as we may without loss of generality) and recalling the definition \eqref{eq.defeta} of $\eta$, it suffices to have
\[H^{13} \gg R^4\phi^2P^{5+\varepsilon}\]
as well as
\[H^{14} \gg R^4\phi P^{3+\varepsilon}.\]
We thus choose
\[H \asymp P^{\varepsilon} \max\left\{ \left(R^4\phi^2P^5\right)^{1/13}, \left(R^4\phi P^3\right)^{1/14},1\right\}.\]
In order for this choice to satisfy $H \le P$, we require $R^4\phi^2 \ll P^{8-\varepsilon}$ as well as $R^4\phi \ll P^{11-\varepsilon}$.

Recalling $\phi \le \frac{1}{QR} \le \frac{1}{R^2}$, both conditions are satisfied for any $Q \le P^{3/2}$.

We have thus found an admissible choice of $H$, leading to a satisfactory estimate for $\Sigma(R,\phi)$ under the assumption of $E \ll 1$.

We now enquire under which circumstances this assumption is justified.

For convenience, denote $\phi_0=(R^4P^{31})^{-\frac{1}{15}}$. The relevance of this parameter comes from the observation that for $\phi \le \phi_0$, one has
\[
H \asymp P^{\varepsilon} \max\left\{ \left(R^4\phi P^3\right)^{1/14},1\right\}
\]
and $\eta \asymp \frac{1}{HP^2}$ whereas for $\phi \ge \phi_0$, one has
\[
H \asymp P^{\varepsilon}\max\left\{ \left(R^4\phi^2P^5\right)^{1/13}, 1\right\}
\]
and $\eta \asymp \phi$.

To prove $E \ll 1$, we need to check that $RH^3\eta \ll P^{-\varepsilon}$ as well as $\left(\frac{H^2}{RP^2\eta}\right)^7 P^2\eta \ll P^{-\varepsilon}$.

We begin by checking that $RH^3\eta \ll P^{-\varepsilon}$. First, if $\phi \le \phi_0$, we have
\begin{align*}
    RH^3\eta &\ll P^{\varepsilon} \frac{QH^2}{P^2}\\
    &\ll P^{\varepsilon}\frac{Q}{P^2} \left(1+(R^4\phi P^3)^{1/14}\right)\\
    & \ll P^{\varepsilon} \cdot \left(\frac{Q}{P^2}+\frac{Q^{9/7}}{P^{11/7}}\right).
\end{align*}
This bound is satisfactory if $Q \ll P^{11/9-\varepsilon}$.

Next, if $\phi \ge \phi_0$, we have
\begin{align*}
    RH^3\eta&\ll P^{\varepsilon} RH^3\phi\\
    &\ll P^{\varepsilon} \cdot \frac{1}{Q} \cdot \left(1+\left(R^4\phi^2P^5\right)^{3/13}\right)\\
    &\ll P^{\varepsilon} \cdot \frac{P^{15/13}}{Q}
\end{align*}
which is satisfactory if $Q \gg P^{15/13+\varepsilon}$.

We thus choose $Q=P^{13/11}$, ensuring that $RH^3\eta \ll P^{-\varepsilon}$ in both cases, and noting that this  also satisfies our earlier rough assumption $Q \le P^{3/2}$.

Finally, we need to enquire whether $\left(\frac{H^2}{RP^2\eta}\right)^7 P^2\eta \ll P^{-\varepsilon}$.

For $\phi \le \phi_0$, we have $\eta \asymp \frac{1}{HP^2}$ so that
\[\left(\frac{H^2}{RP^2\eta}\right)^7 P^2\eta \ll P^{\varepsilon} \frac{H^{20}}{R^7}\]
so that it suffices to have $H \ll R^{7/20-\varepsilon}$.

Recalling our choice of $H$ in this case, it is thus sufficient to have $R \gg P^{\varepsilon}$ as well as additionally $\phi \le \phi_1$ where
\[
\phi_1=R^{9/10}P^{-3-\varepsilon}.
\]
Similarly, if $\phi \ge \phi_0$ we have $\eta \asymp \phi$ so that
\[\left(\frac{H^2}{RP^2\eta}\right)^7 P^2\eta \ll \frac{H^{14}}{R^7P^{12}\phi^6}\]
and hence by our definition of $H$, it suffices to have $R\gg P^{\varepsilon}$ as well as additionally $\phi \ge \phi_2$ where
\[\phi_2=\frac{1}{P^{\frac{43}{25}-\varepsilon}R^{7/10}}.\]
Summarizing, we have obtained a satisfactory bound for $\Sigma(R,\phi)$ if $R \gg P^{\varepsilon}$ and $\phi \le \min(\phi_0,\phi_1)$ or $\phi \ge \max(\phi_0,\phi_2)$.

Letting $R_0=P^{4/5+\varepsilon}$, a quick computation shows that $\phi_2 \le \phi_0 \le \phi_1$ if $R \ge R_0$ whereas $P^{-\varepsilon}\phi_1 \le \phi_0 \le \phi_2 P^{\varepsilon}$ if $R \le R_0$.

In the first case, our argument already covers all possible values of $\phi$. We are thus left with the case where $R \le R_0$ and $P^{-\varepsilon}\phi_1 \le \phi \le \phi_2 P^{\varepsilon}$ or $R \le P^{\varepsilon}$.

It is here that we require the bound obtained by Weyl differencing. Indeed, applying Lemma \ref{lem.weyl_estimate} with $s=14$ and noting that the assumption $Q \le P^{3/2}$ is satisfied, we obtain
\[
\Sigma(R,\phi) \ll P^{\varepsilon}\left[R^2\phi P^{14} \left(R\phi+\frac{1}{R\phi P^3}\right)^{7/4}\right]^n.
\]
Recalling our goal $\Sigma(R,\phi) \ll P^{11n-\varepsilon}$, it then suffices to have
\[R^2\phi P^3 \left(R\phi+\frac{1}{R\phi P^3}\right)^{7/4} \ll P^{-\varepsilon}.\]
But this will be satisfied if 
\begin{equation}\label{eq.conditionphi}
\frac{R^{1/3}}{P^{3-\varepsilon}} \ll \phi \ll \frac{1}{P^{12/11+\varepsilon} R^{15/11}}.
\end{equation}
Under the assumption $R \le R_0$ and $P^{-\varepsilon}\phi_1 \le \phi \le \phi_2 P^{\varepsilon}$, this will thus be true as soon as
\[\phi_1 \gg \frac{R^{1/3+\varepsilon}}{P^3}\]
as well as
\[\phi_2 \ll  \frac{1}{P^{12/11+\varepsilon} R^{15/11}}.\]
The first condition is always satisfied for $R \gg P^{\varepsilon}$ while the second one is satisfied for $R \ll P^{\frac{346}{365}-\varepsilon}$ which is indeed true under the assumption $R \le R_0$.

Finally, we need to treat the cases where $R \le P^{\varepsilon}$. Here of course, we need to use that we are on the minor arcs so that $\phi \ge P^{-3+\nu}$. But it is easy to see that in that case \eqref{eq.conditionphi} is also satisfied, thus finishing our proof of Theorem \ref{thm.asymptotic}.\qed

\printbibliography

\end{document}